\documentclass[11pt]{amsart}
\usepackage{amsmath,amssymb,amsthm,latexsym,geometry,pifont,fullpage}
\usepackage[all]{xy} 

\newcommand{\R}{\mathbb{R}}
\newcommand{\Rn}{\R^n}
\newcommand{\ttw}{\mathbb{T}^2}
\newcommand{\tn}{\mathbb{T}^n}

\newcommand{\glthr}{\mathrm{GL}(3,\Z)}

\newcommand{\cohot}{\mathrm{H}^2(B; \Z^n_{\rho})}
\newcommand{\coho}{\mathrm{H}}

\newcommand{\cohoz}{\mathrm{H}^2(B; \Z)}

\newcommand{\cohor}{\mathrm{H}^3(B;\R)}

\newcommand{\Z}{\mathbb{Z}}
\newcommand{\cotan}{\mathrm{T}^*}
\newcommand{\de}{\mathrm{d}}
\newcommand{\fib}{\xymatrix@1{ F \;\ar@{^{(}->}^-{\iota}[r] & (M, \omega)
    \ar[r]^-{\pi} & B}}
\newcommand{\fibs}{\xymatrix@1{ F \;\ar@{^{(}->}[r]& M
    \ar[r] & B}}
\newcommand{\fibpl}{\xymatrix@1{ \mathrm{H}_1(F,\Z) \;\ar@{^{(}->}[r]
    & \mathcal{P} 
    \ar[r] & B}}
\newcommand{\fibrpl}{\xymatrix@1{ \mathrm{H}_1(F,\Z) \;\ar@{^{(}->}[r]
    & \mathcal{P}' 
    \ar[r] & B}}
\newcommand{\affr}{\mathrm{Aff}_{\Z}(\Rn)}
\newcommand{\afft}{\mathrm{Aff}_{\Z}(\Rn/\Z^n)}
\newcommand{\gln}{\mathrm{GL}(n,\Z)}
\newcommand{\zn}{\Z^n}
\newcommand{\tth}{\mathbb{T}^3}
\newcommand{\zt}{\mathbb{Z}^3}
\newcommand{\rt}{\mathbb{R}^3}

\newcommand{\am}{(B,\mathcal{A})}
\newcommand{\lf}{\tn \hookrightarrow (M,\omega) \to \am}
\newcommand{\rzn}{\R^n/\Z^n}

\theoremstyle{definition}
\newtheorem{defn}{Definition}

\newtheorem*{mr}{Main Result}

\theoremstyle{remark}
\newtheorem{rk}{Remark}
\newtheorem*{qn}{Question}
\newtheorem*{notation}{Notation}

\theoremstyle{plain}
\newtheorem{thm}{Theorem}
\newtheorem{lemma}{Lemma}
\newtheorem{cor}{Corollary}

\title{Almost Lagrangian Obstruction} 
\author{Daniele Sepe}
\thanks{I would like to thank Jarek Kedra for many stimulating
  discussions and the referee for his insightful comments.}
\address{School of Mathematics and Maxwell Institute for Mathematical
  Sciences, The University of Edinburgh, James 
  Clerk Maxwell Building, King's Buildings, Edinburgh, EH9 3JZ, UK}
\email{ds316@le.ac.uk}
\subjclass{37J35,37J05,57R17,70H06}
\begin{document}
\maketitle
\begin{abstract}
  The aim of this paper is to describe the obstruction for an almost
  Lagrangian fibration to be Lagrangian, a problem which is central to the
  classification of Lagrangian fibrations and, more generally, to
  understanding the obstructions to carry out surgery of integrable
  systems, an idea introduced in \cite{zun_symp2}. It is shown that
  this obstruction (namely, the
  homomorphism $\mathcal{D}$ of Dazord and Delzant
  \cite{daz_delz} and Zung \cite{zun_symp2}) is related to the cup
  product in cohomology with local coefficients on the base space $B$ of the
  fibration. The map is described explicitly and some explicit examples
  are calculated, thus providing the first examples of non-trivial
  Lagrangian obstructions.
\end{abstract}
\setcounter{tocdepth}{1}
\tableofcontents

\section{Introduction}
A fibration $F \hookrightarrow M \to B$ is said to be
\emph{Lagrangian} if $M$ admits a symplectic form $\omega$ such that
the fibres are Lagrangian. Throughout this paper, the fibres $F$ are
taken to be compact and connected; by the Liouville-Mineur-Arnol`d
theorem (cf. \cite{arnold}), this condition implies that the fibres
are diffeomorphic to tori. Another consequence of this theorem is that
the base space of a Lagrangian fibration is an integral affine
manifold, as shown by many authors (\textit{e.g.}
\cite{bates_monchamp,dui,sepe_topc,zun_symp2}). An integral affine
structure $\mathcal{A}$ on $B$ is an atlas $\{(U_i, \psi_i)\}$ whose
changes of coordinates $\psi_i \circ \psi_j^{-1} : \Rn \to \Rn$ lie in the group
$$\affr : = \gln \ltimes \Rn, $$
\noindent
where the action of $\gln$ on $\Rn$ is the standard one. 

\begin{notation}
  An integral affine manifold is henceforth denoted by $\am$, while $\tn
  \hookrightarrow (M,\omega) \to \am$ denotes a Lagrangian fibration
  which induces the integral affine structure $\mathcal{A}$ on its
  base space. The latter is referred to as a Lagrangian fibration over
  $\am$.   
\end{notation}

Given a Lagrangian fibration over $\am$, the atlas $\mathcal{A}$ on the
$n$-dimensional manifold $B$ determines a representation 
$$ \mathfrak{a} : \pi_1(B) \to \affr,$$
\noindent
which arises from identifying $\pi_1(B)$ as the group of deck
transformations acting by integral affine diffeomorphisms on the
universal cover $(\tilde{B},\tilde{\mathcal{A}})$, where
$\tilde{\mathcal{A}}$ is the integral affine structure on $\tilde{B}$
induced by the universal covering $q: \tilde{B} \to B$ (cf. \cite{aus}). The homomorphism
$\mathfrak{a}$ is known as the affine holonomy of the integral affine 
manifold $\am$, while the composite
$$ \mathfrak{l} := \mathrm{Lin} \circ \mathfrak{a} : \pi_1(B) \to
\affr \to \gln $$
\noindent
is called the linear holonomy of $\am$. On the other hand, associated
to a Lagrangian fibration over $\am$ is a homomorphism
$$ \rho: \pi_1(B) \to \gln,$$
\noindent
called the \emph{monodromy} of the fibration (cf. \cite{dui}). The
monodromy and linear holonomy are related by

\begin{equation}
  \label{eq:27}
  \mathfrak{l} = \rho^{-T},
\end{equation}
\noindent
where $-T$ denotes inverse transposed. Conversely, any $n$-dimensional
integral affine manifold $\am$ with linear holonomy $\mathfrak{l}$ is
the base space of a Lagrangian fibration $\lf$ with monodromy
representation $\mathfrak{l}^{-T}$. \\

Another consequence of the Liouville-Mineur-Arnol`d theorem also is that the
structure group of a Lagrangian fibration over an $n$-dimensional
manifold reduces to
$$ \afft := \gln \ltimes \rzn.$$
\noindent 
This observation implies there are only two
topological invariants associated to a Lagrangian fibration, namely
the aforementioned monodromy $\rho$, and a cohomology class $[c] \in
\cohot$ called the Chern class, which is the obstruction to the
existence of a section (cf. \cite{dui,sepe_topc}). Note that $[c]$
lies in the cohomology theory with local coefficients in the 
module $\pi_1(\tn)\cong \Z^n$ twisted by the representation $\rho$ (cf.
\cite{daz_delz,dui,sepe_topc,zun_symp2}). Given an integral affine
manifold $\am$ with linear holonomy $\mathfrak{l}$, the set of
(isomorphism classes of) Lagrangian 
fibrations over $\am$ (and, thus, with monodromy $\mathfrak{l}^{-T}$)
is in $1$-$1$ correspondence with a subgroup 
$$ R \leq \coho^2(B;\Z^n_{\mathfrak{l}^{-T}}). $$
\noindent
It is natural to ask what subgroup $R$ is: work of Dazord and Delzant
in \cite{daz_delz} and of Zung in \cite{zun_symp2} proves that there exists a
homomorphism
$$ \mathcal{D} : \coho^2(B;\Z^n_{\mathfrak{l}^{-T}}) \to \cohor $$
\noindent
with $R = \ker \mathcal{D}$. This map represents the obstruction to
construct an appropriate symplectic form on the total space of an
\emph{almost Lagrangian fibration} over $\am$ (cf. Definition \ref{defn:suit}). However, there is no explicit general description
of this homomorphism in the literature. \\ 

The aim of this paper is to relate the
homomorphism $\mathcal{D}$ to the cohomology of $B$ in the case of
regular Lagrangian fibrations and to carry out some explicit
calculations. Dazord and Delzant in \cite{daz_delz} show that, when
the monodromy representation is trivial, the homomorphism
$\mathcal{D}$ is given by the \emph{cup product} on the singular
cohomology ring of $B$. The main result of this paper generalises this
description.  

\begin{mr}
  The homomorphism $\mathcal{D}$ arises from taking twisted cup
  product on $B$ (\textit{i.e.} cup product in cohomology
  with local coefficients).
\end{mr}

For a fixed integral affine manifold $\am$, the homomorphism
$\mathcal{D}$ distinguishes those almost Lagrangian fibrations 
over $\am$ which are actually Lagrangian and those which are not; the
latter are called \emph{fake}. The local
structure of almost and fake Lagrangian fibrations has been studied
by Fass{\`o} and Sansonetto \cite{fasso} in the context of a generalised
notion of Liouville integrability. Elements of $R$ are called
\emph{realisable}, the terminology 
coming from the theory of symplectic realisations of Poisson manifolds
(cf. \cite{daz_delz,vaisman}). The above Main Result is important in the
classification of Lagrangian fibration over manifolds of dimension
greater than or equal to $3$, in so far as it provides an algorithm to
compute the subgroup of realisable classes. Furthermore, the
constructions in this paper are related to the idea of
\emph{integrable surgery}, introduced in \cite{zun_symp2},  
where two or more completely integrable Hamiltonian systems are
glued together to yield a new one. \\

The structure of the paper is as follows. The notion of
\emph{almost Lagrangian fibrations} over an integral affine manifold
$\am$ is introduced in Section
\ref{sec:almost-lagr-fibr}, following ideas in \cite{fasso}. These
fibrations are the right  `candidates' to be Lagrangian, as they share
with their Lagrangian counterparts all topological invariants and
constructs. Section \ref{sec:general} deals with the proof of the
Main Result and is divided into two parts. The first sets up some
notation and describes the obstruction in the case of trivial linear holonomy
of $\am$ (cf. \cite{daz_delz}). The general case is
presented in Section \ref{sec:general-case}: the
homomorphism $\mathcal{D}$ is described in detail and shown
to coincide with the twisted cup product on $B$. Moreover, an explicit
algorithm to compute $R$ in general is provided. This
is a non-trivial task since the homomorphisms involved 
in the definition of $\mathcal{D}$ are defined on simplicial spaces,
which are not easily dealt with from a computational point of view. In
Section \ref{sec:applications}, there are some examples related to the
classification of Lagrangian fibrations. In particular, the algorithm
outlined in Section \ref{sec:general} is
applied to some concrete examples of integral affine manifolds.  


\section{Almost Lagrangian fibrations} \label{sec:almost-lagr-fibr}
Throughout this section, fix an integral affine manifold $\am$ with
linear holonomy
$\mathfrak{l} : \pi_1(B) \to \gln$. Denote by $\pi :M \to B$ the projection
map of a Lagrangian fibration $\lf$. The
Liouville-Arnol`d-Mineur theorem (cf. \cite{arnold}) implies that there
exists a good (in the sense of Leray) open cover $\mathcal{U} = \{U_i\}$ of $B$ by
trivialising neighbourhoods such that $\pi^{-1}(U_i)$ has coordinates
$(\mathbf{x}_i, \mathbf{t}_i)$ (called local \emph{action-angle}
coordinates) with the following properties
\begin{itemize}
\item the action coordinates $\mathbf{x}_i$ define the integral affine
  structure $\mathcal{A}$, while the angle coordinates $\mathbf{t}_i$
  define integral affine coordinates on the fibres;
\item for each $i$, the restriction $\omega_i = \omega|_{\pi^{-1}(U_i)}$
takes the form 
$$ \omega_i = \sum\limits^n_{l=1} \de x_i^l \wedge \de t_i^l; $$
\item the transition functions $\phi_{ji}$ are given by
\begin{equation}
  \label{eq:1}
  \begin{split}
    \phi_{ji} :(U_i \cap U_j) \times \tn & \to (U_i \cap U_j) \times
    \tn \\
    (\mathbf{x}_{i},\mathbf{t}_i) & \mapsto (A_{ji} \mathbf{x}_i + 
    \mathbf{c}_{ji}, A_{ji}^{-T} \mathbf{t}_i +
    \mathbf{g}_{ji}(\mathbf{x}_i))
  \end{split}
\end{equation}
\noindent
where $A_{ji} \in \gln$ and $\mathbf{c}_{ji} \in \Rn$ are constant, and
$\mathbf{g}_{ji} : U_{ji} \to \tn$ is a local function constrained by
the fact that
$$\phi^*_{ji}\omega_j =\omega_i$$
\noindent
for all $i,j$. 
\end{itemize}

The above necessary conditions for a Lagrangian fibration over $\am$
and the results of \cite{fasso} motivate the following definition.

\begin{defn} \label{defn:suit}
  A fibration $\tn \hookrightarrow M \to \am$ is said to be \emph{almost
    Lagrangian} if there exists a good open cover $\mathcal{U} =
  \{U_i\}$ of $B$ such that
  \begin{itemize}
  \item there exist trivialisations $ \varphi_i : \pi^{-1}(U_i) \to U_i
    \times \tn$ inducing local coordinates $(\mathbf{x}_i,
    \mathbf{t}_i)$;
  \item the local coordinates $\mathbf{x}_i$ are integral affine
    coordinates on $\am$;
  \item the transition functions $\phi_{ji} = \varphi_j \circ
    \varphi_i^{-1}$ are of the form 
    \begin{equation}
      \label{eq:laac}
      \phi_{ji}(\mathbf{x}_{i},\mathbf{t}_i) = (A_{ji} \mathbf{x}_i +
      \mathbf{c}_{ji}, A_{ji}^{-T} \mathbf{t}_i +
      \mathbf{g}_{ji}(\mathbf{x}_i)) 
    \end{equation}
    \noindent
    where $A_{ji} \in \gln$ and $\mathbf{c}_{ji} \in \Rn$ are constant,
    $\mathbf{g}_{ji} : U_{ji} \to \tn$ is a local function.
  \end{itemize}
\end{defn}

\begin{rk}\label{rk:1}
  \mbox{}
  \begin{itemize}
  \item An almost Lagrangian fibration $\tn \hookrightarrow M \to \am$
    has structure group $\afft$. Thus the topological classification
    of almost Lagrangian fibrations coincides with that of Lagrangian
    fibrations (cf. \cite{dui}) and it makes sense to consider the
    monodromy and Chern class of such fibrations; 
  \item If $\rho$ denotes the monodromy representation of an almost
    Lagrangian fibration $\tn \hookrightarrow M \to \am$, then
    $$\rho = \mathfrak{l}^{-T},$$
    \noindent
    where $\mathfrak{l}$ denotes the linear holonomy of $\am$;
  \item For a fixed almost Lagrangian fibration, each
    $\pi^{-1}(U_i)$ admits  a local symplectic form 
    \begin{equation}
      \label{eq:19}
      \omega_i =  \sum\limits^n_{l=1} \de x_i^l \wedge \de t_i^l,
    \end{equation}
    \noindent
    where $(\mathbf{x}_i, \mathbf{t}_i)$ are local coordinates as in
    Definition \ref{defn:suit}. An almost Lagrangian fibration is
    Lagrangian if and only if it is possible to choose the transition
    functions $\phi_{ji}$ so that 
    $$ \phi_{ji}^*\omega_j = \omega_i. $$
  \item In analogy with the Lagrangian case, $\tn
    \hookrightarrow M \to \am$ denotes an almost Lagrangian fibration
    constructed from the integral affine structure $\mathcal{A}$ on
    $B$. Such fibrations are in 1-1 correspondence with the set of
    $\tn$-bundles with 
    structure group $\afft$ whose 
    monodromy equals $\mathfrak{l}^{-T}$.
  \end{itemize}
\end{rk}

The period lattice bundle associated to an integral affine manifold
$\am$ plays an important role in the study and construction of almost
Lagrangian fibrations. 

\begin{defn}\label{defn:plb}
  Let $\mathbf{x}_i$ be local integral affine coordinates defined on
  subsets $U_i$ of $\am$. The \emph{period
    lattice bundle} $P_{\am} \subset \cotan 
  B$ associated to $\am$ is the discrete subbundle (with fibre $\Z^n$)
  which is locally 
  defined by
  $$ P_{\am}|_{U_i} := \{ (\mathbf{x}_i, \mathbf{v}_i) \in \cotan U_i:\,
  \mathbf{v}_i \in \Z\langle \de x^1_i, \ldots, \de x^n_i \rangle \}. $$
\end{defn}

\begin{rk}\label{rk:isomorphism_discrete}
  Let $\tn \hookrightarrow M \to \am$ be an almost Lagrangian
  fibration. Just as in the Lagrangian case, the bundle obtained by
  replacing the torus fibres with their first homology groups
  $\coho_1(\tn;\Z)$ is isomorphic to the period lattice bundle
  associated to $\am$. The isomorphism is defined locally using the
  symplectic forms $\omega_i$ of Remark \ref{rk:1}, and it sends the
  differentials $\de x^l_i$ to the homology cycle generated by the
  time-1 flow of the Hamiltonian vector field of the function $x^l_i$
  (cf. \cite{dui}).
\end{rk}

The inclusion $ P_{\am} \hookrightarrow \cotan B$ defines an injective
morphism of sheaves 
\begin{equation}
  \label{eq:9}
  \mathcal{P}_{\am} \hookrightarrow \mathcal{Z}^1(B)
\end{equation}
\noindent
where $\mathcal{P}_{\am}, \mathcal{Z}^1(B)$ denote the sheaves of sections
of the period lattice bundle $P_{\am} \to B$ and of closed sections of the cotangent
bundle $\cotan B \to B$ respectively
(cf. \cite{daz_delz,zun_symp2}). Recall that the Chern class of an
almost Lagrangian fibration over $\am$ lies in the group
$$ \coho^2(B;\mathcal{P}_{\am}) \cong
\coho^2(B;\Z^n_{\mathfrak{l}^{-T}}), $$
\noindent
as shown in \cite{daz_delz,dui}. It is important to note that the
above isomorphism is induced by the isomorphism of $\Z^n$-bundles
described in Remark \ref{rk:isomorphism_discrete}. Henceforth, the
local coefficient system with values in $\Z^n$ and twisted by the
representation $\mathfrak{l}^{-T}$ is identified with the stalk of the
sheaf of sections of the period lattice bundle $P_{\am}$. \\

The morphism of equation \eqref{eq:9} induces a homomorphism of
cohomology groups
\begin{equation}
  \label{eq:23}
  \coho^2(B;\mathcal{P}_{\am}) \to \coho^2(B;\mathcal{Z}^1(B)).
\end{equation}
\noindent
If $\mathcal{C}^{\infty}(B)$ denotes the sheaf of smooth functions on
$B$, there is a short exact sequence of sheaves
$$ \xymatrix@1{ 0 \ar[r] & \R \ar[r] & \mathcal{C}^{\infty}(B)
  \ar[r]^-{\de} & \mathcal{Z}^1(B) \ar[r] & 0}, $$
\noindent
where $\de$ denotes the standard exterior differential; since $\mathcal{C}^{\infty}(B)$ is a fine sheaf, the induced long
exact sequence in cohomology yields an isomorphism
\begin{equation}
  \label{eq:16}
  \coho^{2}(B;\mathcal{Z}^1(B)) \cong \coho^{3}(B;\R).
\end{equation}

\begin{defn}[Dazord and Delzant \cite{daz_delz}]\label{defn:hmdaz_delz}
  The composition of the homomorphism of equation \eqref{eq:23} and
  the isomorphism of equation \eqref{eq:16} yields a homomorphism
  $$ \mathcal{D} : \coho^2(B;\mathcal{P}_{\am}) \to \coho^3(B;\R), $$
  \noindent
  called the \emph{Dazord-Delzant} homomorphism.
\end{defn}

\begin{rk}[Dazord and Delzant \cite{daz_delz}] \label{rk:D}
  For an almost Lagrangian fibration over $\am$ with transition
  functions $\phi_{ji}$ as in Definition \ref{defn:suit}, a \v Cech
  cocycle representing $\mathcal{D}[c]$ is given by
  $$ \kappa_{ji} = \phi_{ji}^* \omega_j - \omega_i. $$
\end{rk}

Remark \ref{rk:D} lies at the heart of the proof of the following
theorem, stated below without proof as it is a known result.

\begin{thm}[Dazord and Delzant \cite{daz_delz}]
  The subgroup of realisable Chern classes for a given integral affine
  manifold $\am$ is given by $\ker \mathcal{D}$.
\end{thm}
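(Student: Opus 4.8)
The plan is to prove both inclusions simultaneously by identifying the realisable classes with the image of a connecting homomorphism. By Definition \ref{defn:hmdaz_delz}, $\mathcal{D}$ is the composite of the map $q_*$ of \eqref{eq:23} with the isomorphism \eqref{eq:16}; since the latter is an isomorphism, $\ker\mathcal{D}=\ker q_*$, where $q_*\colon\coho^2(B;\mathcal{P}_{\am})\to\coho^2(B;\mathcal{Z}^1(B))$ is induced by the sheaf inclusion \eqref{eq:9}. It therefore suffices to show that $[c]$ is realisable precisely when $q_*[c]=0$.

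To compute $\ker q_*$ I would use the short exact sequence of sheaves
\[
0\longrightarrow\mathcal{P}_{\am}\longrightarrow\mathcal{Z}^1(B)\longrightarrow\mathcal{Q}\longrightarrow0,\qquad\mathcal{Q}:=\mathcal{Z}^1(B)/\mathcal{P}_{\am},
\]
whose injection is \eqref{eq:9}. Its long exact sequence contains
\[
\coho^1(B;\mathcal{Q})\xrightarrow{\ \partial\ }\coho^2(B;\mathcal{P}_{\am})\xrightarrow{\ q_*\ }\coho^2(B;\mathcal{Z}^1(B)),
\]
so $\ker q_*=\operatorname{im}\partial$. The geometric core is then the identification $\operatorname{im}\partial=\{\text{realisable classes}\}$, which I would draw from Duistermaat's analysis (cf. \cite{dui}): $\mathcal{Q}$ is the sheaf of Lagrangian sections of the local model $\cotan B/P_{\am}$ (germs of closed $1$-forms modulo periods, translation by which is a fibrewise symplectomorphism), gluing the models $\cotan U_i/P_{\am}$ by $\mathcal{Q}$-valued cocycles classifies Lagrangian fibrations over $\am$ by $\coho^1(B;\mathcal{Q})$, and the map assigning to such a fibration the Chern class of its underlying $\tn$-bundle coincides with $\partial$ --- by naturality of the connecting map under the inclusion of $\mathcal{Z}^1(B)$ into the sheaf of all $1$-forms. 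Hence $[c]=\partial\xi$ for some Lagrangian structure $\xi$ if and only if $[c]$ is realisable, i.e. if and only if $[c]\in\operatorname{im}\partial=\ker\mathcal{D}$.

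I would make this concrete, and locate the real work, via Remark \ref{rk:D}: for an almost Lagrangian representative with transition functions $\phi_{ji}$, the failure of the local forms $\omega_i$ to glue is $\kappa_{ji}=\phi_{ji}^*\omega_j-\omega_i$, and $[\kappa]=\mathcal{D}[c]$. The forward direction is then immediate, since a Lagrangian representative has $\kappa\equiv0$ by Remark \ref{rk:1}. For the converse one realises the vanishing of $[\kappa]$ by Chern-class-preserving modifications of the data: fibrewise translations $b_i\colon U_i\to\tn$ on single charts, changing $\omega_i$ by the basic form $\sum_l\de x_i^l\wedge\de b_i^l$, and closed-$1$-form adjustments of the gluing maps $\mathbf{g}_{ji}$ on overlaps; each alters $\kappa$ by a coboundary in the \v Cech--de Rham double complex. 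I expect the main obstacle to be exactly this matching: showing that the two bidegree components of a double-complex primitive of $\kappa$ are realised by genuine such modifications, and in particular that the overlap adjustments leave the triple-overlap cocycle --- the Chern class --- unchanged, which is the geometric meaning of the vanishing of the $C^2(\mathcal{U};\mathcal{Z}^1(B))$-component of the primitive. Granting this, $\mathcal{D}[c]=0$ yields a choice making $\kappa\equiv0$, hence a global symplectic form with Lagrangian fibres and Chern class $[c]$.
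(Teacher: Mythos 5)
The paper offers no proof of this statement to compare against: it is quoted as a known result of Dazord and Delzant, the only indication of its proof being that the cocycle $\kappa_{ji}=\phi_{ji}^*\omega_j-\omega_i$ of Remark \ref{rk:D} ``lies at the heart'' of it. Your principal argument is correct and is, in substance, the classical argument of the cited sources \cite{daz_delz,dui}: since \eqref{eq:16} is an isomorphism, $\ker\mathcal{D}=\ker q_*$ for the map \eqref{eq:23}; the inclusion \eqref{eq:9} extends to the short exact sequence $0\to\mathcal{P}_{\am}\to\mathcal{Z}^1(B)\to\mathcal{Q}\to 0$, whose long exact sequence gives $\ker q_*=\mathrm{im}\,\partial$; and $\mathrm{im}\,\partial$ coincides with the realisable classes because (i) every Lagrangian fibration over $\am$ is glued from the local models $\cotan U_i/P_{\am}|_{U_i}$ by fibrewise symplectomorphisms covering the identity, which are exactly translations by sections of $\mathcal{Q}$, so such fibrations are classified by $\coho^1(B;\mathcal{Q})$, and (ii) the Chern class of the fibration classified by $\xi$ is $\partial\xi$, by naturality of connecting homomorphisms for the inclusion of this sequence into the analogous one with $\mathcal{Z}^1(B)$ replaced by the fine sheaf of all smooth $1$-forms, the latter sequence being what identifies the topological classification with $\coho^2(B;\mathcal{P}_{\am})$. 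Points (i) and (ii) are precisely what \cite{dui} supplies, so delegating them is legitimate for a theorem the paper itself attributes to the literature. Your secondary, ``concrete'' route through Remark \ref{rk:D} is the one the paper gestures at: the forward implication (a Lagrangian representative forces $\mathcal{D}[c]=[\kappa]=0$) is complete, but the converse is only sketched, and the step you flag is genuinely the delicate one --- an adjustment $h_{ji}$ of $\mathbf{g}_{ji}$ solving $\sum_l\de x^l\wedge\de h_{ji}^l=-\de\tau_{ji}$ is determined only up to closed forms and need not satisfy the twisted cocycle condition modulo $\Z^n$ on triple overlaps, so one must show it can be corrected without altering the Chern class. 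Since the sheaf-theoretic argument already closes the proof, that unfinished sketch is supplementary rather than a gap; what it would buy, if completed, is an explicit symplectic form realising a given class in $\ker\mathcal{D}$, which the abstract connecting-homomorphism argument does not exhibit.
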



\section{The Dazord-Delzant homomorphism and equivariant cup
  products} \label{sec:general}
\subsection{The case with trivial monodromy}
Let $\am$ be an integral affine manifold with trivial linear
holonomy. The period lattice bundle $P_{\am}$ admits a global frame of
closed forms $\{\theta^1, \ldots,
\theta^n\}$ which, locally, are given by the differentials of integral
affine coordinates (cf. \cite{aus}). The set of isomorphism classes of
almost Lagrangian bundles over $\am$ is in 1-1 correspondence with
elements of the cohomology group $\coho^2(B;\Z^n)$. The universal
coefficient theorem and the fact 
that $\Z^n$ is a free module imply that there is an isomorphism
\begin{equation}
  \label{eq:24}
  \coho^2(B;\Z^n) \cong \coho^2(B;\Z) \otimes \Z^n
\end{equation}
\noindent
(cf. \cite{spanier}). The above coefficient system $\Z^n$ is given by
the stalk of the sheaf of sections $\mathcal{P}_{\am}$ of the period lattice
bundle $P_{\am}$, which, in this case, corresponds to integral linear
combinations of $\theta^1, \ldots, \theta^n$. Thus elements of
$\coho^2(B;\Z^n)$ are of the form
$$ \sum\limits_{p,q} n_{pq} [\beta_p] \otimes \theta^q,$$
\noindent
where $n_{pq} \in \Z$ and $\{[\beta_p]\}$ form a basis of
$\coho^2(B;\Z)$. The following theorem, stated below without proof,
describes the Dazord-Delzant homomorphism in the case of trivial
linear holonomy of the base space $\am$.

\begin{thm}[Dazord-Delzant \cite{daz_delz}] \label{thm:main}
The map $\mathcal{D} : \mathrm{H}^2(B; \zn) \to \mathrm{H}^3(B; \R)$
is given by
\begin{equation}
\label{eq:main}
\sum\limits_{p,q}{n_{pq} [\beta_p] \otimes \theta^q} \mapsto
\Bigg[\sum\limits_{p,q}{n_{pq} \beta^{\R}_p \cup \theta^q}\Bigg] 
\end{equation}
\noindent
where $\beta_p^{\R}$ is a $2$-cocycle with values in the real numbers
representing the image of a cocycle $\beta_p$ representing $[\beta_p]$
under the map $\cohoz \to \cohoz \otimes_{\Z} \R \cong \mathrm{H}^2(B; \R)$. 
\end{thm}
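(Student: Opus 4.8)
The plan is to compute $\mathcal{D}$ directly on a \v Cech representative, using the concrete description of $\mathcal{D}[c]$ furnished by Remark \ref{rk:D} together with the \v Cech--de Rham bicomplex. Since $\mathcal{D}$ is a homomorphism and the cup product is bilinear, it suffices to treat a general class $[c]=\sum_{p,q}n_{pq}[\beta_p]\otimes\theta^q$ at once. Because the linear holonomy is trivial, every transition function may be taken with $A_{ji}=I$, so that
$$\phi_{ji}(\xvec_i,\tvec_i)=(\xvec_i+\mathbf{c}_{ji},\,\tvec_i+\mathbf{g}_{ji}(\xvec_i)),$$
and the global frame gives $\theta^l=\de x^l_i$ on each $U_i$, consistently on overlaps. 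First I would fix an almost Lagrangian fibration over $\am$ whose Chern class is $[c]$ (such a fibration exists by the correspondence of Remark \ref{rk:1}) and choose, on each contractible overlap $U_{ji}$, a lift $\tilde{\mathbf{g}}_{ji}:U_{ji}\to\Rn$ of the $\tn$-valued datum $\mathbf{g}_{ji}$. The integer-valued \v Cech $2$-cocycle $c^l_{kji}=\tilde g^l_{kj}-\tilde g^l_{ki}+\tilde g^l_{ji}$ then represents $[c]$ in $\coho^2(B;\mathcal{P}_{\am})\cong\coho^2(B;\Z)\otimes\Z^n$; under \eqref{eq:24} and the identification $\theta^q\leftrightarrow e_q$ one has $c^l_{kji}=\sum_p n_{pl}(\beta_p)_{kji}$, where $(\beta_p)_{kji}\in\Z$ represents $[\beta_p]$.

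Next I would apply Remark \ref{rk:D}. A short computation with \eqref{eq:19} and the transition functions gives
$$\kappa_{ji}=\phi^*_{ji}\omega_j-\omega_i=\sum^n_{l=1}\de x^l_i\wedge\de g^l_{ji}=\sum^n_{l=1}\theta^l\wedge\de\tilde g^l_{ji},$$
a basic $2$-form on $U_{ji}$; since each $\theta^l$ is closed, $\kappa$ is a closed $2$-form and its \v Cech coboundary vanishes by the cocycle condition on the $\de\tilde g^l_{ji}$. Thus $\kappa$ is a closed-$2$-form-valued \v Cech $1$-cochain, i.e.\ a cocycle of total degree $3$ in the \v Cech--de Rham bicomplex $\check{C}^p(\mathcal{U},\Omega^q(B))$, and under the isomorphism \eqref{eq:16} identifying $\coho^2(B;\mathcal{Z}^1(B))$ with de Rham cohomology $\coho^3(B;\R)$ it represents $\mathcal{D}[c]$.

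The heart of the argument is a zigzag in the bicomplex that moves $\kappa$ from bidegree $(1,2)$ to the bidegree $(2,1)$ representative of the cup product. Setting $\eta_{ji}:=\sum_l\tilde g^l_{ji}\,\theta^l\in\check{C}^1(\mathcal{U},\Omega^1(B))$, the identities $\de\eta_{ji}=-\kappa_{ji}$ and $(\delta\eta)_{kji}=\sum_l c^l_{kji}\,\theta^l$ show that $\kappa$ and $\sum_l c^l_{kji}\theta^l$ differ by the total coboundary $D\eta$, hence represent the same class in $\coho^3(B;\R)$. Finally I would identify $\sum_l c^l_{kji}\theta^l$ as the standard \v Cech--de Rham representative of the cup product: viewing the integral \v Cech cocycle $(\beta_p)_{kji}$ as the image $\beta^\R_p\in\coho^2(B;\R)$ and multiplying by the closed global form $\theta^l$ yields precisely $\sum_{p,l}n_{pl}\,\beta^\R_p\cup\theta^l$, which is the asserted formula.

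The main obstacle, and the step requiring the most care, is the bookkeeping in this last stage: verifying that the bidegree-shifting zigzag is compatible with the sheaf-theoretic definition of $\mathcal{D}$ in Definition \ref{defn:hmdaz_delz}, and fixing the sign and orientation conventions (in Remark \ref{rk:D}, in the total differential $D=\delta+(-1)^p\de$, and in the cup product) so that the result comes out with the stated sign. Once these conventions are pinned down, the computation above is essentially forced.
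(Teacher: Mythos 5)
The paper does not prove Theorem \ref{thm:main}: it is stated explicitly ``without proof'' as a quoted result of Dazord and Delzant \cite{daz_delz}, so there is no internal argument to compare yours against; judged on its own, your \v Cech--de Rham computation is correct and is a legitimate reconstruction of the missing proof. Two comments. First, your route is longer than necessary and rests on Remark \ref{rk:D}, which the paper also quotes without proof. By Definition \ref{defn:hmdaz_delz}, $\mathcal{D}[c]$ is \emph{by definition} the class of the bidegree-$(2,1)$ cochain $\bigl(\sum_l c^l_{kji}\theta^l\bigr)$ obtained by viewing the $\mathcal{P}_{\am}$-valued \v Cech cocycle $(c_{kji})$ as valued in closed $1$-forms and applying the isomorphism \eqref{eq:16}; and that cochain is already the standard \v Cech--de Rham representative of the product of the constant cocycle $(\beta_p)_{kji}$ with the global closed form $\theta^l$, i.e.\ of $\sum_{p,l}n_{pl}\,\beta^{\R}_p\cup\theta^l$ (no sign enters here, since one of the two bidegrees involved is zero). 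So the detour through the fibration, the local symplectic forms and $\kappa_{ji}$ can be omitted entirely; what your zigzag $\eta_{ji}=\sum_l\tilde g^l_{ji}\theta^l$ actually establishes is Remark \ref{rk:D} itself, i.e.\ the consistency of the cocycle $\kappa$ with the sheaf-theoretic definition --- a nice by-product, but it is what creates the sign bookkeeping you worry about at the end, and a self-contained proof is cleaner going the direct way. In any case the residual sign ambiguity is purely conventional (it depends on the conventions for the connecting map in \eqref{eq:16} and for the total differential) and does not affect the subgroup $R=\ker\mathcal{D}$. Second, a small inaccuracy: $c^l_{kji}$ need not equal $\sum_p n_{pl}(\beta_p)_{kji}$ as cocycles, only up to a \v Cech coboundary; since cup product with the closed forms $\theta^l$ descends to cohomology this is harmless, but the equality should read ``is cohomologous to'', or else the $\beta_p$ should be chosen to be those particular cocycles.
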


\subsection{The general case}\label{sec:general-case}
Theorem \ref{thm:main} shows that the map $\mathcal{D}$ is related to
cup product in the real cohomology of an integral affine manifold
$\am$ with trivial linear holonomy. In general, the situation is not
as simple. Let $\am$ be an
integral affine manifold with linear holonomy $\mathfrak{l}$;
$\coho^2(B;\Z^n_{\mathfrak{l}^{-T}})$ denotes the set of isomorphism
classes of almost Lagrangian fibrations over $\am$. The universal
coefficient theorem for cohomology with local 
coefficients in \cite{greenblatt} gives the following short exact
sequence 
\begin{equation*}
0 \to \mathrm{H}^2(B;\Z [\pi]_{\text{taut}}) \otimes_{\Z [\pi]} \zn \to
\cohot \to \mathrm{Tor}(\mathrm{H}^3(B;\Z [\pi]_{\text{taut}}), \zn) \to
0 
\end{equation*}
\noindent
where $\pi = \pi_1(B)$, $\Z [\pi]$ denotes the group ring of $\pi$ and
$\text{taut}: \pi \to \mathrm{Aut}(\Z[\pi])$ defines the tautological
representation of $\pi$ on $\Z [\pi]$ given by left multiplication. In
general, neither $\mathrm{H}^3(B;\Z [\pi]_{\text{taut}})$ nor $\zn$ are
free $\Z [\pi]$-modules and so there is no equivariant equivalent of the
isomorphism in equation \eqref{eq:24}. In what follows, a candidate for the map
$\mathcal{D}$ is suggested and then it is proved to be the correct one
in Theorem \ref{thm:equimain}.\\ 

Throughout this section, fix an integral affine manifold $\am$ with
linear holonomy $\mathfrak{l}$ and let $\pi$ denote its fundamental
group. Let $\tn \hookrightarrow M \to \am$ be
an almost Lagrangian fibration with Chern class $[c]
\in \coho^2(B;\Z^n_{\mathfrak{l}^{-T}})$. Let $q: \tilde{B} \to B$ be
the universal covering of $B$, and let $\tilde{\mathcal{A}}$ denote
the induced integral affine structure on $\tilde{B}$ via $q$
(cf. \cite{aus}).

\begin{rk}\label{rk:cc}
The Chern class $[c]$ of the fibration $M \to B$ is pulled back to the
Chern class $[\tilde{c}]=q^{*} [c] \in \mathrm{H}^2(\tilde{B};\zn)$ of
$q^* M \to (\tilde{B},\tilde{A})$ by
functoriality of the Chern class (cf. \cite{sepe_topc}). A cocycle representing
$[c]$ is a cochain in
$\hom_{\Z[\pi]}(\mathrm{C}_2(\tilde{B}); \Z^n)$, where $\Z^n$ is a
$\Z[\pi]$-module via the monodromy representation
$\mathfrak{l}^{-T}$. In particular, if $c$
is a such a cocycle, it is just an
\emph{equivariant} cocycle representing $[\tilde{c}]$, since
$\hom_{\Z[\pi]}(\mathrm{C}_*(\tilde{B}); \Z^n)$ is a subcomplex of
complex $\hom_{\Z}(\mathrm{C}_*(\tilde{B}); \Z^n)$ (cf. \cite{white}). 
\end{rk}

Let $P_{\am}, P_{(\tilde{B}, \tilde{\mathcal{A}})}$ denote the period
lattice bundles associated to $\am$ and $(\tilde{B},
\tilde{\mathcal{A}})$ respectively; there is a commutative diagram 
\begin{equation}
\label{eq:commdiag}
\xymatrix{\tilde{P}_{(\tilde{B},\tilde{\mathcal{A}})} \,\ar[d]^-{Q}
  \ar@{^{(}->}[r] & \cotan \tilde{B} \ar[d]^-{Q} \\ 
  P_{\am}\, \ar@{^{(}->}[r] & \cotan B,}
\end{equation}
\noindent
where $Q: \cotan \tilde{B} \cong q^* \cotan B \to \cotan B$ is induced
by the quotient map $q$. The above diagram implies that there exists a
$\pi$-equivariant global frame of closed forms $\{\tilde{\theta}^1, \ldots,
\tilde{\theta}^n\}$ of $\cotan \tilde{B}$; thus the fibre $\zn$ of the
period lattice bundle $P_{\am} \to B$ can be identified with the fibre of
$\tilde{P}_{(\tilde{B},\tilde{\mathcal{A}})} \to \tilde{B}$. Therefore
the inclusion $\tilde{P}_{(\tilde{B},\tilde{\mathcal{A}})} \subset 
\cotan \tilde{B}$ induces a homomorphism
\begin{equation*}
  \chi: \hom_{\Z}(\mathrm{C}_2(\tilde{B}); \Z^n) \to
  \hom_{\Z}(\mathrm{C}_2(\tilde{B});
  \hom_{\Z}(\mathrm{C}_1(\tilde{B});\R)),
\end{equation*}
\noindent
which maps the subgroup
$\hom_{\Z[\pi]}(\mathrm{C}_2(\tilde{B}); \Z^n)$  to $\pi$-equivariant
homomorphisms.

\begin{cor} \label{cor:s1}
The following diagram commutes
\begin{equation}
\label{eq:step}
\xymatrix{\hom_{\Z[\pi]}(\mathrm{C}_2(\tilde{B}); \Z^n)
  \ar@{^{(}->}[r]^-{\iota} \ar[d]^-{\chi} &
  \hom_{\Z}(\mathrm{C}_2(\tilde{B}); \Z^n) \ar[d]^-{\chi} \\ 
\hom_{\Z[\pi]}(\mathrm{C}_2(\tilde{B});
\hom_{\Z}(\mathrm{C}_1(\tilde{B});\R)) \ar@{^{(}->}[r]^-{\iota} & 
\hom_{\Z}(\mathrm{C}_2(\tilde{B}); \hom_{\Z}(\mathrm{C}_1(\tilde{B});\R))} 
\end{equation}
\end{cor}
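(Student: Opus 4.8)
The statement to prove is that the square \eqref{eq:step} commutes, where the horizontal maps are the inclusions $\iota$ of the equivariant cochain groups into the full (non-equivariant) cochain groups, and the vertical maps are both instances of the homomorphism $\chi$ induced by the inclusion $\tilde{P}_{(\tilde{B},\tilde{\mathcal{A}})} \subset \cotan \tilde{B}$. The plan is to observe that this is essentially a naturality statement: the single map $\chi$ on the larger group $\hom_{\Z}(\mathrm{C}_2(\tilde{B}); \Z^n)$ restricts to the map $\chi$ on the subgroup $\hom_{\Z[\pi]}(\mathrm{C}_2(\tilde{B}); \Z^n)$, and the content is precisely that this restriction lands inside the equivariant subgroup in the bottom-left corner. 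In other words, commutativity is immediate once one verifies that $\chi$ carries equivariant cochains to equivariant cochains, which is exactly the property asserted in the sentence preceding the corollary.

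First I would make $\chi$ completely explicit at the level of a single simplex. Using the $\pi$-equivariant frame $\{\tilde{\theta}^1,\ldots,\tilde{\theta}^n\}$ of $\cotan\tilde{B}$ furnished by diagram \eqref{eq:commdiag}, I identify the fibre $\Z^n$ of $\tilde P_{(\tilde B,\tilde{\mathcal A})}$ with $\Z\langle \tilde\theta^1,\ldots,\tilde\theta^n\rangle$. Given $f \in \hom_{\Z}(\mathrm{C}_2(\tilde{B}); \Z^n)$ and a $2$-simplex $\sigma$, the value $f(\sigma)=\sum_l m_l(\sigma)\,\tilde\theta^l$ is a closed $1$-form; then $\chi(f)(\sigma)$ is the element of $\hom_{\Z}(\mathrm{C}_1(\tilde B);\R)$ that sends a $1$-simplex $\tau$ to the integral $\int_\tau \sum_l m_l(\sigma)\tilde\theta^l$. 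This integration pairing is the concrete realisation of the inclusion $\tilde P \hookrightarrow \cotan\tilde B$ at the cochain level.

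Next I would unwind the two composites around the square. Going right-then-down applies $\chi$ (the non-equivariant version) to the inclusion of an equivariant cochain, which by the formula above is literally the same assignment $\sigma\mapsto(\tau\mapsto\int_\tau f(\sigma))$ regardless of which copy of $\chi$ one names; going down-then-right first computes $\chi(f)$ as a $\pi$-equivariant homomorphism and then includes it. Since both routes produce the identical cochain, commutativity reduces to checking that the $\chi$ of the left-hand column is well-defined, i.e. that when $f$ is $\pi$-equivariant the integration cochain $\chi(f)$ is again $\pi$-equivariant. The $\pi$-action is diagonal: on $\mathrm{C}_*(\tilde B)$ by deck transformations, on $\Z^n$ via $\mathfrak{l}^{-T}$, and on $\hom_\Z(\mathrm{C}_1(\tilde B);\R)$ through the action on both variables. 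The equivariance of $\chi(f)$ then follows from the two facts that the frame $\{\tilde\theta^l\}$ is $\pi$-equivariant (so a deck transformation $g$ sends $\tilde\theta^l$ to the $\mathfrak{l}^{-T}(g)$-transform of the frame) and that integration is invariant under simultaneous translation of the domain $\tau$ and pullback of the form by $g$.

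The one genuine point requiring care, which I expect to be the main obstacle, is matching up the three $\pi$-actions so that the equivariance of the frame cancels exactly against the twist $\mathfrak{l}^{-T}$ on the coefficients and the deck action on $\mathrm{C}_1(\tilde B)$; this is the step where the compatibility encoded in diagram \eqref{eq:commdiag} is actually used. Once the bookkeeping of these actions is done, the commutativity of \eqref{eq:step} is forced, since the horizontal maps are mere inclusions and the only substantive claim is that the top-left group maps into the bottom-left group under $\chi$.
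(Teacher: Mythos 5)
Your proposal is correct and takes essentially the same route as the paper: the paper's entire proof is the single sentence that the result follows from the commutativity of diagram \eqref{eq:commdiag}, which is exactly the frame-equivariance fact you identify as the crux (the left-hand $\chi$ is the restriction of the right-hand $\chi$, so commutativity reduces to checking that equivariant cochains are sent to equivariant cochains). Your write-up merely fills in the details --- the explicit integration formula for $\chi$ and the bookkeeping of the three $\pi$-actions --- that the paper compresses into that one appeal to \eqref{eq:commdiag}.
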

\begin{proof}
The result follows from the commutativity of the diagram in equation
(\ref{eq:commdiag}). 
\end{proof}

\begin{rk}\label{rk:module_structure}
  In order for $\hom_{\Z[\pi]}(\mathrm{C}_2(\tilde{B});
  \hom_{\Z}(\mathrm{C}_1(\tilde{B});\R))$ to be a $\Z[\pi]$-module,
  fix the action of $\pi$ on $\mathrm{C}_2(\tilde{B})$ to be on the
  left, while the action on $\mathrm{C}_1(\tilde{B})$ to be on the
  right, so that $\hom_{\Z}(\mathrm{C}_1(\tilde{B});\R)$ is a left
  $\Z[\pi]$-module. While it is a technical point, it is important for the proof of Lemma
  \ref{lemma:fun}.
\end{rk}

Let $c$ be a cocycle representing the Chern class $[c]$ of
the almost Lagrangian fibration. The differential $D$ on the complex
$$\hom_{\Z[\pi]}(\mathrm{C}_*(\tilde{B}); 
\hom_{\Z}(\mathrm{C}_*(\tilde{B});\R))$$
\noindent 
is given by the graded sum of the differentials $\partial$ and
$\delta$ on the complexes $\mathrm{C}_*(\tilde{B})$ and 
$\hom_{\Z}(\mathrm{C}_*(\tilde{B});\R)$ respectively. Then   
\begin{equation*}
D \chi (c) =0,
\end{equation*}
\noindent
since $\chi(c)$ takes values in \emph{closed} 1-forms. The chain isomorphism (cf. \cite{dold})
\begin{equation*}
\Psi : \hom_{\Z}(\mathrm{C}_*(\tilde{B});
\hom_{\Z}(\mathrm{C}_*(\tilde{B});\R)) \to
\hom_{\Z}(\mathrm{C}_*(\tilde{B}) \otimes_{\Z}
\mathrm{C}_*(\tilde{B});\R)
\end{equation*}
\noindent
defined by
\begin{equation*}
\Psi (g) (x \otimes y) = (f(y))(x)
\end{equation*}
\noindent
for all $g \in \hom_{\Z}(\mathrm{C}_*(\tilde{B});
\hom_{\Z}(\mathrm{C}_*(\tilde{B});\R))$, $x \in \mathrm{C}_*(\tilde{B})$ and $y \in
\mathrm{C}_*(\tilde{B})$, preserves the $\Z[\pi]$-module structures
defined in Remark \ref{rk:module_structure} and, thus, defines a chain 
isomorphism
\begin{equation*}
  \Psi: \hom_{\Z[\pi]}(\mathrm{C}_*(\tilde{B});
\hom_{\Z}(\mathrm{C}_*(\tilde{B});\R)) \to
\hom_{\Z}(\mathrm{C}_*(\tilde{B}) \otimes_{\Z [\pi]}
\mathrm{C}_*(\tilde{B});\R).
\end{equation*}
\noindent
Therefore there is a commutative diagram
\begin{equation}
  \label{eq:25}
  \xymatrix{\hom_{\Z[\pi]}(\mathrm{C}_*(\tilde{B});
    \hom_{\Z}(\mathrm{C}_1(\tilde{B});\R)) \ar@{^{(}->}[r]^-{\iota}
    \ar[d]^-{\Psi} & 
    \hom_{\Z}(\mathrm{C}_*(\tilde{B});
    \hom_{\Z}(\mathrm{C}_1(\tilde{B});\R)) \ar[d]^-{\Psi} \\ 
    \hom_{\Z}(\mathrm{C}_1(\tilde{B}) \otimes_{\Z [\pi]}
    \mathrm{C}_*(\tilde{B});\R) \ar@{^{(}->}[r]^-{\iota} & 
    \hom_{\Z}(\mathrm{C}_1(\tilde{B}) \otimes_{\Z} \mathrm{C}_*(\tilde{B});\R).}
\end{equation}
\noindent
In light of the above commutative diagram, $\Psi \circ
\chi (c)$ can be seen as an equivariant element of
$\hom_{\Z}(\mathrm{C}_*(\tilde{B}) \otimes
\mathrm{C}_*(\tilde{B});\R)$. Consider the composition
\begin{equation*}
\Phi: \hom_{\Z}(\mathrm{C}_*(\tilde{B}) \otimes \mathrm{C}_*(\tilde{B});\R)
\to \hom_{\Z}(\mathrm{C}_*(\tilde{B} \times \tilde{B});\R) \to
\hom_{\Z}(\mathrm{C}_*(\tilde{B});\R),  
\end{equation*}
\noindent
where the first homomorphism is
induced by the Alexander-Whitney map 
$$A : C_*(\tilde{B} \times
\tilde{B}) \to C_*(\tilde{B}) \otimes C_*(\tilde{B})$$
\noindent
and the second by the diagonal map
$$ \tilde{\Delta}: \tilde{B} \to \tilde{B} \times \tilde{B}. $$
\noindent
It is standard that these maps can be made into equivariant simplicial
chain maps (cf. \cite{spanier}), \textit{i.e.} they can be chosen to
preserve the underlying $\Z[\pi]$-module structures. Therefore there is
a well-defined map
\begin{equation}
  \label{eq:29}
  \Phi \circ \Psi \circ \chi : \hom_{\Z[\pi]}(\mathrm{C}_*(\tilde{B}); \Z^n) \to
  \hom_{\Z}(\mathrm{C}_{*+1}(\tilde{B});\R),
\end{equation}
\noindent
which descends to cohomology. This is the candidate for $\mathcal{D}$. Note that the map $\Phi
\circ \Psi \circ \chi$ on standard cochains on $\tilde{B}$ is simply
given by the cup product by definition of the various homomorphisms
involved (cf. \cite{white}). 

\begin{defn}\label{defn:twisted_cup_product}
  The map of equation \eqref{eq:29} is called the \emph{twisted cup product} on $\am$.
\end{defn}

The following lemma proves that the image of the twisted cup product
lies in the $\pi$-\emph{invariant} forms on $\tilde{B}$,
\textit{i.e.} it defines a form on $B$. 

\begin{lemma} \label{lemma:fun}
Let $c$ be a cocycle representing the Chern class $[c]$ of the fixed
almost Lagrangian fibration $\tn \hookrightarrow M \to \am$. For all $\gamma \in \pi$,
\begin{equation*}
\gamma \cdot (\Phi \circ \Psi \circ \chi (c)) =
\Phi \circ \Psi \circ \chi (c) 
\end{equation*}
\end{lemma}
\begin{proof}
To simplify notation, set $\chi (c) = f$. Fix $\gamma \in \pi$
and let $z$ be a singular $3$-simplex taking values in $\tilde{B}$. Then 
\begin{equation}
\label{eq:18}
\begin{split}
(\Phi \circ \Psi)(f) (z) & = (\Psi (f))((z \circ \lambda^1) \otimes_{\Z \pi} (z \circ \mu^2)) \\
& = f(z \circ \mu^2)(z \circ \lambda^1),  
\end{split}
\end{equation}
\noindent
where the first equality follows from noticing that $\Phi$ is obtained
by composing the pullbacks of the diagonal map and the
Alexander-Whitney map on $\tilde{B}$. The map $\lambda^1: \Delta^1 \to \Delta^3$ is defined on
vertices of the $1$-simplex by $\lambda^1 e_i = e_i$ for $i=0,1$, while
$\mu^2 : \Delta^2 \to \Delta^3$ is defined on vertices of the
$2$-simplex by $\mu^2 e_j = e_{1+j}$ for $j=0,1,2$ (cf. \cite{spanier}).
On the other hand,
\begin{equation*}
\begin{split}
(\gamma \cdot (\Phi \circ \Psi)(f)) (z) & =
(\Phi \circ \Psi)(f) (\gamma \cdot z) \\  
& = (\Psi (f))(((\gamma \cdot z) \circ \lambda^1) \otimes_{\Z \pi}
((\gamma \cdot z) \circ \mu^2)). 
\end{split}
\end{equation*}
\noindent
By definition
\begin{equation}
\label{eq:simpl}
(\gamma \cdot z) \circ \lambda^1 (t)= \gamma ((z \circ \lambda^1)
(t))= ((z \circ \lambda^1) (t))\cdot \gamma^{-1} 
\end{equation}
\noindent
where the last equality follows from the fact that $C_1(\tilde{B})$ is
a \emph{right} $\Z [\pi]$-module (cf. Remark \ref{rk:module_structure}). Similarly, 
\begin{equation}
\label{eq:simpl2}
(\gamma \cdot z) \circ \mu^2 (t)= \gamma ((z \circ \mu^2) (t))= \gamma
\cdot ((z \circ \mu^2)(t)) .
\end{equation}
\noindent
Combining equations (\ref{eq:simpl}) and (\ref{eq:simpl2})
\begin{equation}
\label{eq:tada}
\begin{split}
(\Psi (f))(((\gamma \cdot z) \circ \lambda^1) \otimes_{\Z \pi}
((\gamma \cdot z) \circ \mu^2)) & = (\Psi (f))((z \circ
\lambda^1)\cdot \gamma^{-1} \otimes_{\Z \pi} (\gamma \cdot(z \circ
\mu^2))) \\ 
& = (\Psi (f))((z \circ \lambda^1) \otimes_{\Z \pi} (z \circ \mu^2)) \\
& = f(z \circ \mu^2)(z \circ \lambda^1),
\end{split}
\end{equation}
\noindent
where the penultimate equality follows by properties of the tensor
product. Comparing equations \eqref{eq:18} and (\ref{eq:tada}) yields
the required result.  
\end{proof}

Lemma \ref{lemma:fun} shows that
$\Phi \circ 
\Psi \circ \chi$ defines a homomorphism of cohomology groups
$$\coho^2(B;\Z^n_{\mathfrak{l}^{-T}}) \to \cohor. $$
\noindent
It remains to show that it coincides with the map
$\mathcal{D}$. Let $c$ be a cocycle representing the Chern class $[c]$
of the almost Lagrangian fibration $\tn \hookrightarrow M \to \am$; by
Remark \ref{rk:cc}, considering $c$ as a standard cocycle on
$\tilde{B}$ yields a cocycle representing $q^*[c]$, \textit{i.e.} the
Chern class of the almost Lagrangian bundle obtained by pulling back
along the universal covering $q$. The commutative diagrams of Corollary \ref{cor:s1} and equation 
\eqref{eq:25} imply that
\begin{equation}
\label{eq:monster}
\Psi \circ \Phi \circ \chi (c) = \Psi \circ \Phi \circ \chi (\iota(c)),
\end{equation}
\noindent
where
$$ \iota: \hom_{\Z[\pi]}(\mathrm{C}_2(\tilde{B}); \Z^n) \hookrightarrow
\hom_{\Z}(\mathrm{C}_2(\tilde{B}); \Z^n) $$
\noindent
denotes inclusion. The homomorphism 
$$ \Psi \circ \Phi :
\hom_{\Z}(\mathrm{C}_2(\tilde{B});\hom_{\Z}(\mathrm{C}_1(\tilde{B});\R))
\to \hom_{\Z}(\mathrm{C}_3(\tilde{B});\R) $$
\noindent
equals cup product by definition. Therefore, on the level of
cohomology groups, it equals the Dazord-Delzant homomorphism
$\tilde{\mathcal{D}}$ for the integral affine manifold $(\tilde{B},
\tilde{\mathcal{A}})$ by Theorem \ref{thm:main}. Since $\iota$ induces
the pullback map $q^*$ on cohomology, it follows that
\begin{equation}
\label{eq:equality}
\Phi \circ \Psi \circ \chi = \tilde{\mathcal{D}} \circ q^* :
\coho^2(B;\Z^n_{\mathfrak{l}^{-T}}) \to \coho^3(\tilde{B};\R).
\end{equation}

\begin{lemma} \label{lemma:comm}
The following diagram commutes
\begin{equation*}
\xymatrix{\coho^2(B;\Z^n_{\mathfrak{l}^{-T}}) \ar[r]^-{\mathcal{D}}
  \ar[d]^-{q^*} & \cohor \ar[d]^-{q^*} \\ 
\mathrm{H}^2(\tilde{B}; \zn) \ar[r]^-{\tilde{\mathcal{D}}} &
\mathrm{H}^3(\tilde{B};\R)}  
\end{equation*}
\end{lemma}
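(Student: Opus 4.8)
The plan is to prove naturality of the Dazord--Delzant homomorphism directly from its intrinsic sheaf-theoretic definition (Definition \ref{defn:hmdaz_delz}), rather than through the candidate twisted cup product. Recall that $\mathcal{D}$ is the composite of two homomorphisms: the map $\coho^2(B;\mathcal{P}_{\am}) \to \coho^2(B;\mathcal{Z}^1(B))$ induced on cohomology by the sheaf inclusion of equation \eqref{eq:9}, followed by the connecting isomorphism of equation \eqref{eq:16} arising from the short exact sequence $0 \to \R \to \mathcal{C}^{\infty}(B) \to \mathcal{Z}^1(B) \to 0$; the homomorphism $\tilde{\mathcal{D}}$ on $(\tilde{B}, \tilde{\mathcal{A}})$ is defined by the entirely analogous composite. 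My strategy is to show that each of these two constituent maps fits into a commutative square with the pullback $q^*$, and then to stack the two squares. Note that this argument does \emph{not} invoke equation \eqref{eq:equality}, so there is no circularity with the identification of $\mathcal{D}$ with the twisted cup product that is the object of the main theorem.

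First I would observe that, since $q$ is a local diffeomorphism, it pulls back each sheaf appearing in the definition to its counterpart on $\tilde{B}$. The commutative diagram of equation \eqref{eq:commdiag} identifies $q^{-1}\mathcal{P}_{\am}$ with $\mathcal{P}_{(\tilde{B},\tilde{\mathcal{A}})}$, while pullback of closed $1$-forms, of smooth functions, and of locally constant $\R$-valued functions yields $\mathcal{Z}^1(\tilde{B})$, $\mathcal{C}^{\infty}(\tilde{B})$ and the constant sheaf $\R$ respectively. Under the identification of the stalk of $\mathcal{P}_{\am}$ with $\Z^n_{\mathfrak{l}^{-T}}$, and of that of $\mathcal{P}_{(\tilde{B},\tilde{\mathcal{A}})}$ with the untwisted module $\zn$ (valid because $\tilde{B}$ is simply connected), this is precisely the pullback $q^* : \coho^2(B;\Z^n_{\mathfrak{l}^{-T}}) \to \coho^2(\tilde{B};\zn)$ appearing as the left-hand vertical arrow of the statement.

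Next, the inclusion $\mathcal{P}_{\am} \hookrightarrow \mathcal{Z}^1(B)$ of equation \eqref{eq:9} is carried by $q$ to the inclusion $\mathcal{P}_{(\tilde{B},\tilde{\mathcal{A}})} \hookrightarrow \mathcal{Z}^1(\tilde{B})$ --- this is exactly the content of the top row of diagram \eqref{eq:commdiag} --- and the short exact sequence defining the isomorphism \eqref{eq:16} pulls back to the corresponding short exact sequence on $\tilde{B}$. Functoriality of pullback in sheaf cohomology with respect to morphisms of sheaves then gives commutativity of the square for the first map, and naturality of the connecting homomorphism with respect to morphisms of short exact sequences of sheaves gives commutativity of the square for the second map (the pullback of the exponential-type sequence furnishes the requisite morphism of sequences). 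Composing the two commutative squares yields $q^* \circ \mathcal{D} = \tilde{\mathcal{D}} \circ q^*$, which is the assertion of the lemma.

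I expect the main point requiring care to be bookkeeping rather than a genuine obstacle: one must verify that the pullback identifications of sheaves are exactly those that \emph{define} $q^*$ on twisted cohomology, so that the left-hand vertical arrow really is the map induced by the inclusion $\iota$ at the cochain level noted before equation \eqref{eq:equality}, and in particular that the twisted coefficient system $\Z^n_{\mathfrak{l}^{-T}}$ becomes the untwisted $\zn$ after pullback. Once the sheaves and their morphisms are matched up through diagram \eqref{eq:commdiag}, commutativity is formal, following purely from the functoriality of sheaf cohomology and of the long exact sequence associated with a short exact sequence of sheaves.
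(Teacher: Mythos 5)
Your proposal is correct and takes essentially the same route as the paper's own proof: the paper likewise decomposes $\mathcal{D}$ into the map induced by the sheaf inclusion \eqref{eq:9} followed by the connecting isomorphism \eqref{eq:16}, obtains the first commutative square from the sheaf diagram \eqref{eq:commsheaves} (which follows from \eqref{eq:commdiag}), the second from the commutative ladder of short exact sequences \eqref{eq:30} together with fineness of $\mathcal{C}^{\infty}(B)$ and $\mathcal{C}^{\infty}(\tilde{B})$, and then stacks the two squares. Your phrasing via naturality of the connecting homomorphism is just a more compact way of stating what the paper spells out with the explicit ladder, so the two arguments coincide in substance.
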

\begin{proof}
There is a commutative diagram of sheaves
\begin{equation}
\label{eq:commsheaves}
\xymatrix{\mathcal{P}_{\am} \ar[d]^-{q^*} \ar[r]^-{\jmath} & \mathcal{Z}^1
  (B) \ar[d]^-{q^*} \\ 
\tilde{\mathcal{P}}_{(\tilde{B},\tilde{\mathcal{A}})} \ar[r]^-{\jmath}
& \mathcal{Z}^1(\tilde{B})} 
\end{equation}
\noindent
where $\mathcal{P}_{\am}, \mathcal{\tilde{P}}_{(\tilde{B},\tilde{\mathcal{A}})}$
($\mathcal{Z}^1(B),\mathcal{Z}^1(\tilde{B})$) are the sheaves of
sections of the period lattice bundles $P_{\am} \to B$,
$\tilde{P}_{(\tilde{B},\tilde{\mathcal{A}})} \to 
\tilde{B}$ (sheaves of closed sections of the cotangent bundles
$\cotan B \to B$, $\cotan \tilde{B} \to \tilde{B}$) respectively. The
commutativity of the above diagram follows from equation
\eqref{eq:commdiag}. Equation 
(\ref{eq:commsheaves}) induces a 
commutative diagram of cohomology groups 
\begin{equation*}
\xymatrix{\coho^2(B;\mathcal{P}_{\am}) \ar[r]^-{\mathfrak{D}} \ar[d]^-{q^*} &
  \mathrm{H}^2(B;\mathcal{Z}^1(B)) \ar[d]^-{q^*} \\ 
\mathrm{H}^2(\tilde{B}; \tilde{\mathcal{P}}_{(\tilde{B},\tilde{\mathcal{A}})})
\ar[r]^-{\tilde{\mathfrak{D}}} &
\mathrm{H}^2(\tilde{B};\mathcal{Z}^1(\tilde{B})).} 
\end{equation*}
\noindent
There is a commutative ladder of short exact sequences of sheaves
\begin{equation}
  \label{eq:30}
  \xymatrix{0 \ar[r] & \R \ar[r] \ar[d]^-{q^*} &
    \mathcal{C}^{\infty}(B) \ar[r]^-{\de} \ar[d]^-{q^*} &
    \mathcal{Z}^1(B) \ar[r] \ar[d]^-{q^*} & 0 \\
    0 \ar[r] & \R \ar[r] & \mathcal{C}^{\infty}(\tilde{B})
    \ar[r]^-{\de} & \mathcal{Z}^1(\tilde{B}) \ar[r] &0;}
\end{equation}
\noindent
since both sheaves of smooth functions $\mathcal{C}^{\infty}(B)$ and
$\mathcal{C}^{\infty}(\tilde{B})$ are fine, it follows that there are
isomorphisms
\begin{equation*}
  \begin{split}
    \coho^2(B;\mathcal{Z}^1(B)) &\cong \coho^3(B;\R) \\
    \coho^2(\tilde{B};\mathcal{Z}^1(\tilde{B})) &\cong \coho^3(\tilde{B};\R)
  \end{split}
\end{equation*}
\noindent
which commute with the natural maps $q^*$ induced by the universal
covering $q: \tilde{B} \to B$. Applying these isomorphisms to the commutative diagram of
equation \eqref{eq:30}, the result follows.
\end{proof}

Lemma \ref{lemma:comm} states that $\tilde{\mathcal{D}} \circ q^* =
q^* \circ \mathcal{D}$. It follows from equation (\ref{eq:equality})
that 
\begin{equation}
\label{eq:almostthere}
\Phi \circ \Psi \circ \chi = \tilde{\mathcal{D}}
\circ q^* = q^* \circ \mathcal{D} 
\end{equation}
\noindent
Lemma \ref{lemma:fun} shows that the image of the map
$\Phi \circ \Psi \circ \chi$ lies precisely in
the subgroup of $\pi$-invariant closed $3$-forms on $\tilde{B}$. Such
forms are in $1-1$ correspondence with cohomology classes of $3$-forms
on $B$. Denote by $[\Phi \circ \Psi \circ
\chi(c)]_B$ the cohomology class of the $3$-form on $B$ obtained from
$\Phi \circ \Psi \circ \chi(c)$. Therefore the
following theorem, which contains the main result of the paper, holds. 

\begin{thm} \label{thm:equimain}
If $c$ denotes a cocycle representing $[c] \in \cohot$
\begin{equation*}
[\Phi \circ \Psi \circ \chi(c)]_B = \mathcal{D}[c]
\end{equation*}
\noindent
for all $[c] \in \cohot$. In other words, the Dazord-Delzant
homomorphism $\mathcal{D}$ is given by taking the twisted cup product.  
\end{thm}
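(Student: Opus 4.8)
The plan is to prove the identity at the level of cochains on $B$ rather than by manipulating cohomology classes on $\tilde B$. The key consequence of Lemma \ref{lemma:fun} is that, since $\pi$ acts trivially on $\R$, a $\pi$-invariant element of $\hom_\Z(C_3(\tilde B);\R)$ is exactly an element of $\hom_{\Z[\pi]}(C_3(\tilde B);\R)$, and this complex computes $\coho^*(B;\R)$ directly because $C_*(\tilde B)\otimes_{\Z[\pi]}\Z=C_*(B)$. Thus $\Phi\circ\Psi\circ\chi(c)$ is genuinely a simplicial cocycle on $B$, $[\Phi\circ\Psi\circ\chi(c)]_B$ is its class, and the theorem asks that the resulting homomorphism $\coho^2(B;\mathcal{P}_{\am})\to\cohor$ coincide with $\mathcal{D}$. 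I would stress from the outset that eq. \eqref{eq:almostthere} records only the equality of the two pullbacks along $q$, and that this is insufficient on its own because $q^*$ is not injective on $\cohor$; the equality must therefore be established on $B$ itself.

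First I would match $\Phi\circ\Psi\circ\chi$ with the two constituents of $\mathcal{D}$. By construction $\chi$ sends a cochain valued in the period lattice to the corresponding cochain valued in closed $1$-forms, so on cohomology it realises the map $\coho^2(B;\mathcal{P}_{\am})\to\coho^2(B;\mathcal{Z}^1(B))$ of eq. \eqref{eq:23}. It then remains to show that $\Psi\circ\Phi$ realises the connecting isomorphism $\coho^2(B;\mathcal{Z}^1(B))\cong\cohor$ of eq. \eqref{eq:16}. Since $\tilde B$ is simply connected it carries trivial linear holonomy, so Theorem \ref{thm:main} identifies $\tilde{\mathcal{D}}$ with the cup product $\Psi\circ\Phi\circ\chi$ there; and the connecting isomorphism is assembled from a purely local recipe --- choose primitives of closed $1$-forms over contractible intersections of a good cover and take a \v Cech coboundary --- that is insensitive to the linear holonomy. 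Because the Alexander--Whitney and diagonal maps underlying $\Phi$ were chosen $\pi$-equivariantly, I expect the same simplicial formula to compute $\mathcal{D}$ on $B$ once the equivariance is tracked.

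To make the matching explicit, and to sidestep $q^*$ entirely, I would compare $\Phi\circ\Psi\circ\chi(c)$ with the representative of $\mathcal{D}[c]$ given in Remark \ref{rk:D}, namely $\kappa_{ji}=\phi^*_{ji}\omega_j-\omega_i$. A short computation with the transition functions of eq. \eqref{eq:laac} gives $\kappa_{ji}=\sum_{l,m,p}(A_{ji})_{lm}\,\frac{\partial g^l_{ji}}{\partial x^p_i}\,\de x^m_i\wedge\de x^p_i$, a \v Cech $1$-cochain with values in closed $2$-forms. This is precisely the local shadow of cupping the angle-shift data $\mathbf{g}_{ji}$, which records the Chern cocycle $c$, against the action differentials $\de x_i$, which record the period lattice. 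Checking that the twisted cup product reproduces this \v Cech--de Rham cocycle under the standard comparison between the simplicial and sheaf-theoretic models would then give $[\Phi\circ\Psi\circ\chi(c)]_B=\mathcal{D}[c]$.

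The hard part is exactly this equivariant promotion from $\tilde B$ to $B$, and it is where the only real work beyond the preceding lemmas resides. For $B=\tth$ one has $\cohor\cong\R$ while $\coho^3(\tilde B;\R)=0$, so $q^*$ annihilates the interesting part of the target and nothing can be deduced by cancelling it; the comparison has genuine content and must be carried out in the $\pi$-invariant (equivalently, equivariant) cochain complex. The delicate bookkeeping is to carry the left/right $\Z[\pi]$-module conventions fixed in Remark \ref{rk:module_structure} through the Alexander--Whitney and diagonal maps, so that the holonomy-blind, chart-local identification provided by Theorem \ref{thm:main} assembles into a genuine equality of $\pi$-invariant cochains rather than merely an equality after restriction to $\tilde B$.
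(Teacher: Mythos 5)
Your structural criticism is sound, and it is worth noting that it applies to the paper's own argument: the paper proves Theorem \ref{thm:equimain} by exactly the route you declare insufficient. It establishes equation \eqref{eq:equality} by applying Theorem \ref{thm:main} to $(\tilde{B},\tilde{\mathcal{A}})$, combines this with Lemma \ref{lemma:comm} to obtain \eqref{eq:almostthere}, and then invokes Lemma \ref{lemma:fun} to descend the invariant cochain $\Phi\circ\Psi\circ\chi(c)$ to $B$ and conclude. As you observe, \eqref{eq:almostthere} is an identity of maps into $\coho^3(\tilde{B};\R)$, so it only pins down $q^*[\Phi\circ\Psi\circ\chi(c)]_B$ and $q^*\mathcal{D}[c]$; and since $q^*:\cohor\to\coho^3(\tilde{B};\R)$ is the zero map whenever $\tilde{B}$ is contractible --- which is the case in all three examples of Section \ref{sec:applications} --- equality after applying $q^*$ cannot by itself force equality in $\cohor$. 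So your decision to work directly in the invariant complex $\hom_{\Z[\pi]}(\mathrm{C}_*(\tilde{B});\R)\cong\hom_{\Z}(\mathrm{C}_*(B);\R)$, comparing against the \v Cech representative $\kappa_{ji}$ of Remark \ref{rk:D}, is not merely an alternative route but a necessary strengthening.

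The gap is that the proposal stops exactly where this work begins. Your computation of $\kappa_{ji}=\phi^*_{ji}\omega_j-\omega_i$ from \eqref{eq:laac} is correct, but the three assertions that would turn it into a proof are all deferred: that $\chi$ ``realises'' the sheaf-theoretic map \eqref{eq:23} on cohomology; that $\Psi\circ\Phi$ ``realises'' the connecting isomorphism \eqref{eq:16} equivariantly; and, decisively, that under a \v Cech--simplicial comparison the invariant cochain $\Phi\circ\Psi\circ\chi(c)$ corresponds on $B$ to the \v Cech cocycle $\kappa_{ji}$. These compatibilities, carried out on $B$ itself rather than after inclusion into the cochains of $\tilde{B}$, \emph{are} the content of the theorem; phrases such as ``I expect the same simplicial formula to compute $\mathcal{D}$'' and ``checking that the twisted cup product reproduces this \v Cech--de Rham cocycle \dots would then give'' the result name the theorem rather than prove it. By your own closing paragraph the equivariant promotion is where the only real work resides, and that work is absent: one would need, for instance, to run the argument behind Theorem \ref{thm:main} through an equivariant \v Cech--simplicial double complex built from a $\pi$-invariant good cover of $\tilde{B}$ (equivalently, a good cover of $B$), tracking the module conventions of Remark \ref{rk:module_structure} through the Alexander--Whitney and diagonal maps, so as to end with an identity of $\pi$-invariant cochains. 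As it stands you have a correct diagnosis and a viable plan, but not a proof.
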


\begin{rk}
  It is not necessary to use the universal cover $\tilde{B}$ in the
  above discussion. It is enough to consider an integral affine
  covering $(\hat{B},\hat{\mathcal{A}})$ of $\am$  
  corresponding to the normal subgroup $\ker \mathfrak{l}^{-T}$. By
  construction, $\hat{\mathcal{A}}$ has trivial linear holonomy and so the above
  considerations can be applied. More generally, Lemma \ref{lemma:comm}
  can also be applied to any integral affine regular covering $(\bar{B},\bar{\mathcal{A}})$ of $\am$ to show
  an analogous relation between the maps $\mathcal{D},
  \bar{\mathcal{D}}$. 
\end{rk}

The above discussion provides an explicit algorithm to compute
$\mathcal{D}$ for any integral affine manifold $\am$ with linear
holonomy $\mathfrak{l}$. Given an almost Lagrangian fibration $\tn
\hookrightarrow M \to \am$ with Chern class $[c]$, let $c$ be an equivariant cocycle representing
$[c]$. Fix a cell decomposition of $B$ (which is assumed here to be a
CW complex without loss of generality) and a $\mathfrak{l}^{-T}$-equivariant global frame
$\{\tilde{\theta}^1, \ldots, \tilde{\theta}^n\}$ for the embedding of
the period lattice bundle $\tilde{P}_{(\tilde{B},\tilde{\mathcal{A}})} \to \tilde{B}$ of the pullback
fibration $\tilde{M} \to \tilde{B}$. Note that the cell decomposition of
$B$ induces a $\pi_1(B)$-equivariant cell decomposition of
$\tilde{B}$; moreover, $\mathrm{C}_i(\tilde{B})$ is a free
$\Z[\pi_1(B)]$-module with basis given by choosing a representative of each
of the $i$-cell in $B$. To each $2$-cell $e^2_r$ of
$B$, $c$ associates a vector $\mathbf{v}_r=(v^1_r, \ldots,
v^n_r)$. Using the map $\chi$, identify the vector $\mathbf{v}_r$ with
$$ v^1_r \tilde{\theta}^1 + \ldots + v^n_r \tilde{\theta}^n $$
\noindent
and for each $2$-cell $e^2_r$ in $B$ choose a Kronecker dual
$\epsilon^2_r \in \hom(\mathrm{C}_2(B);\Z)$. For each $r$, the
pullback of $\epsilon^2_r
: \mathrm{C}_2(B) \to \Z$ to $\tilde{B}$ defines an element
$\tilde{\epsilon}^2_r \in 
\hom_{\Z}(\mathrm{C}_2(\tilde{B});\Z)$. The sum  
$$\sum\limits_{r,l} v^l_r \tilde{\epsilon}^2_r \cup \tilde{\theta}^l $$
\noindent
is then the cocycle $\Phi \circ \Psi \circ \chi
(c)$, whose cohomology class (as a $3$-form with real values on $B$)
is $\mathcal{D}[c]$. 

\section{Examples} \label{sec:applications}
In this section the subgroup $R$ of realisable Chern classes is
computed for some integral affine manifolds so as to illustrate the
algorithm to compute the Dazord-Delzant homomorphism $\mathcal{D}$
of Section \ref{sec:general}. The following theorem,
quoted here without proof, is used in each example to prove that the
manifolds under consideration are indeed integral affine (cf. \cite{goldman}).

\begin{thm}\label{thm:inheritance}
  Let $\am$ be an integral affine manifold and $\Gamma$ a group acting
  on $\am$ by integral affine diffeomorphisms such that the quotient
  $B /\Gamma$ is a manifold. Then $B/\Gamma$ inherits an integral
  affine structure $\mathcal{A}_{\Gamma}$ from $\am$.
\end{thm}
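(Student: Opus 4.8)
The plan is to push the integral affine atlas $\mathcal{A}$ down along the quotient map $p \colon B \to B/\Gamma$, exploiting the fact that $\Gamma$ acts by integral affine diffeomorphisms together with the group structure of $\affr$. For $B/\Gamma$ to be a manifold with $p$ a local diffeomorphism one needs $\Gamma$ to act freely and properly discontinuously, so that $p$ is a covering map; I would make this the standing assumption implicit in the hypothesis, in line with the treatment in \cite{goldman}.

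First I would construct the charts. Fix $[x] \in B/\Gamma$ and, using that $p$ is a covering, choose an evenly covered open neighbourhood $V$ of $[x]$. Select one sheet $\tilde{V} \subset p^{-1}(V)$, shrinking $V$ if necessary so that $\tilde{V}$ is contained in a single integral affine chart $(U_i, \psi_i)$ of $\mathcal{A}$. Since $p|_{\tilde{V}} \colon \tilde{V} \to V$ is a diffeomorphism, define $\bar{\psi} := \psi_i \circ (p|_{\tilde{V}})^{-1} \colon V \to \Rn$. This is a homeomorphism onto its image and hence a candidate chart; ranging over all $[x]$ produces an atlas $\mathcal{A}_{\Gamma}$ on $B/\Gamma$.

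Second, and this is the crux, I would verify that the transition maps of $\mathcal{A}_{\Gamma}$ lie in $\affr$. Two overlapping charts $\bar{\psi}$ and $\bar{\psi}'$ arise from data $(V, \tilde{V}, (U_i,\psi_i))$ and $(V', \tilde{V}', (U_j,\psi_j))$. Over a common point the chosen sheets $\tilde{V}, \tilde{V}'$ need not coincide: they differ by a unique deck transformation, that is, an element $\gamma \in \Gamma$, whence $(p|_{\tilde{V}'})^{-1} = \gamma \circ (p|_{\tilde{V}})^{-1}$ on the overlap. A direct computation then gives
\[
\bar{\psi}' \circ \bar{\psi}^{-1} = \psi_j \circ \gamma \circ \psi_i^{-1}
\]
where defined. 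Since $\gamma$ acts as an integral affine diffeomorphism, a map of exactly this form lies in $\affr$ by the very definition of the integral affine structure; here the group structure of $\affr$ (its closure under composition, which subsumes both the original transitions $\psi_j \circ \psi_i^{-1}$ and the coordinate expressions of $\gamma$) is what guarantees the conclusion.

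Finally I would record that the construction is independent of the choices up to the resulting atlas, so $\mathcal{A}_{\Gamma}$ is a well-defined integral affine structure, and that by construction $p$ is a local integral affine isomorphism from $\am$ onto $(B/\Gamma, \mathcal{A}_{\Gamma})$. The main obstacle is the bookkeeping in the second step: one must simultaneously arrange each chart to sit inside a single sheet and inside a single chart of $\mathcal{A}$, and then correctly identify the ambiguity between sheets with an element of $\Gamma$ so that the deck transformation appears explicitly in the transition formula. The hypothesis that the action is by integral affine diffeomorphisms is precisely what ensures this deck transformation contributes an element of $\affr$ rather than an arbitrary diffeomorphism.
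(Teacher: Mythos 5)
The paper itself gives no proof of this theorem: it is explicitly ``quoted here without proof'' with a citation to Goldman, so there is no internal argument to compare yours against; the comparison is with the standard argument in the literature, and your proof is exactly that standard descent-of-atlas argument, correctly executed. Two remarks. First, your decision to read into the hypothesis that the action is free and properly discontinuous (so that $p : B \to B/\Gamma$ is a covering) is not merely a convenience but a necessity: as literally stated the theorem is false. For instance, $\Z/2$ acting on $\ttw = \R^2/\Z^2$ via $-\mathrm{Id} \in \gltw$ acts by integral affine diffeomorphisms and the quotient is a topological manifold (the pillowcase, homeomorphic to $S^2$), yet $S^2$ admits no affine structure at all since a closed surface carrying one must have zero Euler characteristic. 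Your reading is also the one consistent with the paper's three applications (the torus, the Heisenberg manifold, the mapping torus), each of which is a quotient by a free properly discontinuous integral affine action. Second, a small imprecision in your key step: the deck transformation $\gamma$ relating the two chosen sheets $\tilde{V}, \tilde{V}'$ is unique and constant only on each \emph{connected component} of the overlap $V \cap V'$, not on the whole overlap; since the integral affine condition on an atlas is checked component by component, this costs nothing, but the identity $\bar{\psi}' \circ \bar{\psi}^{-1} = \psi_j \circ \gamma \circ \psi_i^{-1}$ should be asserted per component, with $\gamma$ allowed to vary between components. With that adjustment the argument is complete: each such composite lies in $\affr$ precisely because $\gamma$ acts integrally affinely and $\affr$ is closed under composition, as you say.
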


\subsection*{$\tth$ with standard integral affine structure}
This example has also been considered in \cite{sepe_ex}, where
different methods are used. Consider the
integral affine manifold $\rt/\zt$, where the action of $\Z^3$ on
$\R^3$ is by translations along the standard cocompact lattice. These
are integral affine diffeomorphisms of $\R^3$ with respect to its standard
integral affine structure and thus the quotient is an integral affine
manifold, which is denoted by $\R^3/\Z^3$ for notational ease. Its
linear monodromy $\mathfrak{l}$ is trivial. Let $(x^1, x^2,x^3)$ be
the standard (integral affine) 
coordinates on $\rt$ inducing integral affine coordinates on $\tth$,
which are also denoted by $(x^1, x^2, x^3)$ by abuse of
notation. Let $F \hookrightarrow M \to \R^3/\Z^3$ be an
almost Lagrangian fibration with trivial monodromy (the
fibre is denoted by $F$ to avoid confusion with the base). The period
lattice bundle $P_{\R^3/\Z^3} \to \R^3/\Z^3$ associated to this fibration is trivial
and the embedding of equation \eqref{eq:24} determines a global
framing of $\cotan \R^3/\Z^3 \cong \R^3/\Z^3 \times \R^3$. Fix the framing to be
$\{ \de x^1, \de x^2, \de x^3\}$. Let $\{ [\beta_1], [\beta_2], [\beta_3]\}$
denote the standard ordered basis of $\mathrm{H}^2(\R^3/\Z^3; \Z)$. Under the
homomorphism
\begin{equation}\label{eq:26}
  \begin{split}
    \mathrm{H}^2(\R^3/\Z^3; \Z) & \to \mathrm{H}^2(\R^3/\Z^3;\Z) \otimes_{\Z} \R
    \cong \mathrm{H}^2(\R^3/\Z^3;\R) \\
    [A] & \mapsto [A] \otimes_{\Z} 1
  \end{split}  
\end{equation}
\noindent
the above basis is mapped to the standard ordered basis $\{[\de x^1
\wedge \de x^2], [\de x^2\wedge \de x^3], [\de x^3 \wedge \de x^1]\}$
of $\mathrm{H}^2(\R^3/\Z^3;\R)$. In order to follow the conventions set in
\cite{sepe_ex}, fix the ordered frame of the period lattice bundle to be
$\{ \theta^1=\de x^3, \theta^2=\de x^1, \theta^3=\de x^2\}$. Let 
$$[c] = (c_{kl}) \in \mathrm{H}^2(\R^3/\Z^3;\zt) \cong
\mathrm{H}^2(\R^3/\Z^3; \Z) \otimes \Z\langle \de x^3, \de x^1, \de x^2
\rangle$$
\noindent
be the Chern class of the fibration $M \to \R^3/\Z^3$, so that
$$ [c] = \sum\limits_{l,r=1}^3c_{lr}\, [\beta_l] \otimes \theta^r.$$
\noindent
The result of Theorem \ref{thm:main} shows that
$$ \mathcal{D}[c] = \sum\limits_{l,r=1}^3c_{lr}\, [\beta^{\R}_l \cup \theta^r]$$
\noindent 
where $\beta^{\R}_l$ is a cocycle representing the image of the
cohomology class $[\beta_l]$ under the map of equation
\eqref{eq:26}. Therefore,
$$ \mathcal{D}[c] = (c_{11} + c_{22} + c_{33}) \, [\de x^1 \wedge \de
x^2 \wedge \de x^3] \in \mathrm{H}^3(\R^3/\Z^3;\R)$$
\noindent 
where the isomorphism between singular cohomology with real
coefficients and de Rham cohomology has been used tacitly. Since $[\de
x^1 \wedge \de x^2 \wedge \de x^3]$ is a generator of
$\mathrm{H}^3(\R^3/\Z^3;\R)$,
$$\mathcal{D}[c] = 0 \quad \Leftrightarrow \quad c_{11} + c_{22} + c_{33} =
0,$$
\noindent
as shown in \cite{sepe_ex}.

\subsection*{The $3$-dimensional Heisenberg manifold}
The $3$-dimensional Heisenberg manifold is an interesting example of
an integral affine manifold as its fundamental group is not abelian,
but nilpotent. The relation between affine manifolds with nilpotent
fundamental groups and affine geometry has already been investigated
in \cite{hirsch}, although much less is known in the context of
solvable fundamental groups.\\

Let 
$$ \mathrm{N}_3(\R) = \Bigg\{
\begin{pmatrix}
  1 & x^1 & x^3 \\
  0 & 1 & x^2 \\
  0 & 0 & 1
\end{pmatrix}\,:\, x^1,x^2,x^3 \in \R \Bigg\}$$
\noindent
denote the set of unipotent upper-triangular $3 \times 3$ matrices with real
coefficients and let
$$ \Gamma = \mathrm{N}_3(\Z) = \Bigg\{
\begin{pmatrix}
  1 & a^1 & a^3 \\
  0 & 1 & a^2 \\
  0 & 0 & 1
\end{pmatrix}\,:\, a^1,a^2,a^3 \in \Z \Bigg\}$$
\noindent
denote the lattice of unipotent upper-triangular $3 \times 3$ matrices with
integral coefficients. The $3$-dimensional Heisenberg manifold is
given by
$$ H = \Gamma \backslash \mathrm{N}_3(\R) $$
\noindent
where $\Gamma$ acts on $\mathrm{N}_3(\R)$ by left
multiplication. $\mathrm{N}_3(\R) \cong \rt$ is an integral
affine manifold; let $x^1,x^2,x^3 : \mathrm{N}_3(\R) \to \R$ be globally
defined integral affine coordinates on $\mathrm{N}_3(\R)$. The lattice
$\Gamma \subset \mathrm{N}_3(\R)$ is generated by the matrices
\begin{equation}
  \label{eq:12}
  \alpha =
  \begin{pmatrix}
    1 & 1 & 0 \\
    0 & 1 & 0 \\
    0 & 0 & 1
  \end{pmatrix}, \quad \beta =
  \begin{pmatrix}
    1 & 0 & 0 \\
    0 & 1 & 1 \\
    0 & 0 & 1  
  \end{pmatrix}, \quad \gamma =
  \begin{pmatrix}
    1 & 0 & 1 \\
    0 & 1 & 0 \\
    0 & 0 & 1
  \end{pmatrix}.
\end{equation}
\noindent
Denote a point in $\mathrm{N}_3(\R)$ by its integral affine
coordinates $(x^1, x^2, z^3)$ and let $L : \Gamma \to
\mathrm{Aut}(\mathrm{N}_3(\R))$ denote the representation of $\Gamma$
induced by left multiplication. Note that
\begin{equation}
  \label{eq:13}
  \begin{split}
    \alpha \cdot
    \begin{pmatrix}
      x^1 \\
      x^2 \\
      x^3
    \end{pmatrix}
    & =
    \begin{pmatrix}
      1 & 0 & 0 \\
      0 & 1 & 1 \\
      0 & 0 & 1
    \end{pmatrix}
    \begin{pmatrix}
      x^1 \\
      x^2 \\
      x^3
    \end{pmatrix}
    +
    \begin{pmatrix}
      1 \\
      0 \\
      0
    \end{pmatrix} \\
    \beta \cdot \begin{pmatrix}
      x^1 \\
      x^2 \\
      x^3
    \end{pmatrix}
    & =
    \begin{pmatrix}
      1 & 0 & 0 \\
      0 & 1 & 0 \\
      0 & 0 & 1
    \end{pmatrix}
    \begin{pmatrix}
      x^1 \\
      x^2 \\
      x^3
    \end{pmatrix}
    +
    \begin{pmatrix}
      0 \\
      1 \\
      0
    \end{pmatrix} \\
    \gamma \cdot \begin{pmatrix}
      x^1 \\
      x^2 \\
      x^3
    \end{pmatrix}
    & =
    \begin{pmatrix}
      1 & 0 & 0 \\
      0 & 1 & 0 \\
      0 & 0 & 1
    \end{pmatrix}
    \begin{pmatrix}
      x^1 \\
      x^2 \\
      x^3
    \end{pmatrix}
    +
    \begin{pmatrix}
      0 \\
      0 \\
      1
    \end{pmatrix}
  \end{split}
\end{equation}
\noindent
Equation \eqref{eq:13} shows that the image $L(\Gamma)$ lies in the
group $\mathrm{Aff}_{\Z}(\mathrm{N}_3(\R))$ of integral affine
isomorphisms of $\mathrm{N}_3(\R)$. In particular, Theorem
\ref{thm:inheritance} implies that Heisenberg manifold $H$ inherits an integral affine structure from its
universal cover $\mathrm{N}_3(\R)$. Denote this integral affine
manifold also by $H$ and, by abuse of notation, let $(x^1,x^2,x^3)$ be
its integral affine coordinates. The linear monodromy $\mathfrak{l} :
\Gamma = \pi_1(H) \to \glthr$ is defined on the generators of equation \eqref{eq:12} by
\begin{equation}
  \label{eq:14}
  \mathfrak{l}(\alpha) =
  \begin{pmatrix}
    1 & 0 & 0 \\
    0 & 1 & 1 \\
    0 & 0 & 1
  \end{pmatrix}, \quad \mathfrak{l}(\beta) = I = \mathfrak{l}(\gamma)
\end{equation}
\noindent
Let $M \to H$ be an almost Lagrangian fibration with monodromy $\mathfrak{l}^{-T}$, and let $q: \mathrm{N}_3(\R) \to H$ be the
universal covering map. The pullback fibration $q^* M \to
\mathrm{N}_3(\R)$ has trivial monodromy, since $\mathrm{N}_3(\R)$ is
simply connected. In particular, the period lattice bundle $\tilde{P}_{\mathrm{N}_3(\R)}
\to \mathrm{N}_3(\R)$ into $\cotan
\mathrm{N}_3(\R)$ determines a choice of global frame of $\cotan
\mathrm{N}_3(\R)$ by closed forms. The homomorphism of equation
\eqref{eq:24} gives the ordered frame $\{\tilde{\theta}^1=\de x^3,
\tilde{\theta}^2=\de x^1, \tilde{\theta}^3 =\de
x^2\}$ of $\cotan \mathrm{N}_3(\R)$. This frame is \emph{equivariant}
with respect to the standard symplectic lift of the action of $\pi_1(H) = \Gamma$ on
$\mathrm{N}_3(\R)$ by deck transformations to the cotangent bundle
$\cotan \mathrm{N}_3(\R)$.  \\

The fundamental group $\Gamma$ of $H$ admits the following presentation in terms
of generators $\alpha, \beta, \gamma$ of equation \eqref{eq:12}
$$ \Gamma = \langle \alpha, \beta, \gamma \,:\, \alpha \beta = \gamma
\beta \alpha,\, \alpha \gamma = \gamma \alpha, \, \beta \gamma =
\gamma \beta \rangle$$
\noindent
$H$ admits a CW-decomposition with one $0$-cell $e^0$, three $1$-cells
$e_1^1,e^1_2, e^1_3$, three $2$-cells $e_1^2,e^2_2, e^2_3$ and one
$3$-cell $e^3$. This decomposition induces a $\Z[\Gamma]$-equivariant
CW decomposition on $\tilde{H} = \mathrm{N}_3(\R)$ as in the
previous example. Fix a basis for the graded $\Z[\Gamma]$-module
$\mathrm{C}_*(\tilde{H}) = \bigcup_{l\geq 1} \mathrm{C}_l(\tilde{H})$,
denoted by $e^0, e^1_1,e^1_2, e^1_3,e^2_1,e^2_2, e^2_3,e^3$ 
by abuse of notation. The boundary maps in the
$\Z[\Gamma]$-equivariant chain complex are
\begin{equation}
  \label{eq:15}
    \begin{split}
    \partial_1(e_1^1) &= (\alpha - 1)e^0 \\
    \partial_1(e_2^1) &= (\beta - 1)e^0 \\
    \partial_1(e_3^1) &= (\gamma - 1)e^0 \\
    \partial_2(e_1^2) &= (1- \gamma \beta)e^1_1 + (\alpha-
    \gamma)e^1_2 - e^1_3\\
    \partial_2(e_2^2) &= (1- \gamma)e^1_2 - (1- \beta)e^1_3 \\
    \partial_2(e_3^2) &= (1- \gamma)e^1_1 + (\alpha-1)e^1_3 \\
    \partial_3(e^3) &= (\gamma-1)e^2_1 + (\alpha-\gamma)e^2_2 + (1-\gamma\beta)e^2_3,
  \end{split}
\end{equation}
\noindent
and can be obtained from the above presentation of the group by
noticing that the augmented chain complex $\mathrm{C}_* (\tilde{H})
\to \Z$ is a free $\Z[\Gamma]$-resolution of the trivial module
$\Z$. The twisted cohomology group $\mathrm{H}^2(H; \zt_{\mathfrak{l}^{-T}})$ is
calculated as follows. Let $\phi = (\phi_1,\phi_2,\phi_3):
\mathrm{C}_2(\tilde{B}) \to \zt$ be 
a $\Z[\Gamma]$-equivariant map. Then
\begin{equation*}
  \begin{split}
    (\delta_3 \phi)(e^3) & = \phi (\partial_3 e^3) \\ 
    & = (\mathfrak{l}^{-T}(\gamma)-I)\phi(e^2_1) +(\mathfrak{l}^{-T}(\alpha) -
    \mathfrak{l}^{-T}(\gamma))\phi(e^2_2)+(I -
    \mathfrak{l}^{-T}(\gamma) \mathfrak{l}^{-T}(\beta))\phi(e^2_3) \\
    & = \begin{pmatrix}
      0 & 0 & -1 \\
      0 & 0 & 0 \\
      0 & 0 & 0
    \end{pmatrix}
    \phi(e^2_2)
  \end{split}
\end{equation*}
\noindent
Thus $\phi$ defines a closed $2$-cocycle if and only if
\begin{equation*}
  \phi_3(e^2_2) = 0,
\end{equation*}
\noindent
and 
\begin{equation}
  \label{eq:20}
  \ker \delta \cong (\Z \oplus \Z \oplus \Z) \oplus (\Z \oplus
  \Z \oplus 0) \oplus (\Z \oplus \Z \oplus \Z), 
\end{equation}
\noindent
where each summand in brackets corresponds to the contribution of a
$2$-cell. Suppose now that $\phi = \delta \psi \in
\hom_{\Z[\Gamma]}(\mathrm{C}_2(\tilde{H}); \zt)$, then 
\begin{equation*}
  \begin{split}
    (\delta \psi)(e^2_1) & =(I -\mathfrak{l}^{-T}(\gamma)\mathfrak{l}^{-T}(\beta)) \phi(e^1_1) +
    (\mathfrak{l}^{-T}(\alpha) - \mathfrak{l}^{-T}(\beta))\phi(e^1_2) - \phi(e^1_3) \\
    (\delta \psi)(e^2_2) & =(I -\mathfrak{l}^{-T}(\gamma)) \phi(e^1_2) -
    (I - \mathfrak{l}^{-T}(\beta))\phi(e^1_2)\\
    (\delta \psi)(e^2_3) & =(I -\mathfrak{l}^{-T}(\gamma)) \phi(e^1_1) +
    (\mathfrak{l}^{-T}(\alpha)-I)\phi(e^1_3) \\
  \end{split}
\end{equation*}
\noindent
which implies that
\begin{equation}
  \label{eq:21}
  \mathrm{im}\, \delta  \cong (\Z \oplus \Z \oplus \Z) \oplus (0
  \oplus 0 \oplus 0) \oplus (0 \oplus 0\oplus 0).
\end{equation}
\noindent
Equations \eqref{eq:20} and \eqref{eq:21} imply that
\begin{equation}
  \label{eq:22}
  \mathrm{H}^2(H; \zt_{\mathfrak{l}^{-T}}) = \ker \delta / \mathrm{im}\, \delta
  \cong (0 \oplus 0 \oplus 0) \oplus (\Z 
  \oplus \Z \oplus 0) \oplus (\Z \oplus \Z \oplus \Z).
\end{equation}
\noindent
Thus it is possible to calculate the image of the map $\mathcal{D}$
using Theorem \ref{thm:equimain}. Let $\epsilon^2_1, \epsilon^2_2,
\epsilon^2_3 \in \hom (C_2(H); \Z)$ be Kronecker duals to
$e^2_1, e^2_2, e^2_3$. As explained at the end of Section
\ref{sec:general}, these induce elements $\tilde{\epsilon}^2_l \in
\hom_{\Z}(C_2(\tilde{H}); \Z)$ for $l = 1,2,3$. \\

Let $[c] \in \mathrm{H}^2(H; \zt_{\mathfrak{l}^{-T}})$ be represented by a cocycle
\begin{equation*}
  c = \sum\limits_{l,r=1}^3 c_{lr} \tilde{\epsilon}^2_l \otimes \tilde{\theta}^r.
\end{equation*}
Thus a $3$-cocycle on $\tilde{H}$ representing $\mathcal{D}[c]$ is given
by
\begin{equation}\label{eq:28}
  \sum\limits_{l,r=2}^3 c_{lr} \tilde{\epsilon}^2_l \cup \tilde{\theta}^r.
\end{equation}
\noindent
The calculations to determine $\mathrm{H}^2(H;\zt_{\mathfrak{l}^{-T}})$ show that
it is possible to assume that $c_{1r},c_{23} = 0$ for
$r=1,2,3$. Explicit representatives of $\tilde{\epsilon}^2_2,
\tilde{\epsilon}^2_3$ on a fundamental domain for $H$ in $\tilde{H}$
in terms of differential forms are given by $\de x^2 \wedge \de x^3,
\de x^3 \wedge \de x^1$ respectively. Therefore the cocycle of
equation \eqref{eq:28} can be represented in terms of differential
forms by 
\begin{equation*}
  (c_{22} + c_{33})\, \de x^1 \wedge \de x^2
  \wedge \de x^3.
\end{equation*}
\noindent
The form $\de x^1 \wedge \de x^2 \wedge \de x^3$ is under the action
of $\Gamma$ on $\tilde{H}$ and so it descends to $H$; its cohomology
class $[\de x^1 \wedge \de x^2 \wedge \de x^3]_{H}$ generates
$\mathrm{H}^3(H;\R)$ and 
therefore 
\begin{equation*}
  \mathcal{D}[c] = (c_{22} + c_{33})\, [\de x^1 \wedge \de x^2
  \wedge \de x^3]_{H}.
\end{equation*}

\subsection*{Mapping torus of $-\text{Id.}: \ttw \to \ttw$}
Let $B$ denote the mapping torus of the involution $-\text{Id.}: \ttw
\to \ttw$. This manifold admits an integral affine structure, since it
can be realised as follows. Consider the $\Z/2$-action on $\rt/\zt$ defined by 
\begin{equation}
  \label{eq:8}
  \zeta \cdot (x^1, x^2, x^3) = (x^1 + 1/2, -x^2, -x^3)  
\end{equation}
\noindent
where $(x^1, x^2, x^3)$ are integral affine coordinates on $\tth$ as
in the previous example, and $\zeta \in \Z/2$ is a generator. The
action is free and by integral affine 
transformations and thus the quotient $B = (\R^3/\Z^3)/(\Z/2)$ is an integral
affine manifold. Let $\am$ denote this integral affine manifold and let
$\mathfrak{l}$ be its linear holonomy. \\

Let $F \hookrightarrow M \to \am$ be an almost Lagrangian fibration with monodromy $\mathfrak{l}^{-T}$ and
Chern class $[c] \in \mathrm{H}^2(B;\zt_{\mathfrak{l}^{-T}})$. Denote by $ q:\R^3/\Z^3
\to B$ the integral affine double covering map given by the
$\Z/2$-action of equation \eqref{eq:8}. The pullback almost 
Lagrangian fibration $q^* M \to \R^3/\Z^3$ has trivial monodromy by construction. The
period lattice bundle $\hat{P}_{\R^3/\Z^3} \to  
\R^3/\Z^3$ associated to the pullback fibration $q^*M \to \R^3/\Z^3$ is trivial
and its embedding into $\cotan \R^3/\Z^3$ induces a choice of global
ordered frame $\{\tilde{\theta}^1 = \de x^3, \tilde{\theta}^2 = \de
x^1, \tilde{\theta}^3= \de x^2\}$. \\

The fundamental group of $B$ can be presented using generators and
relations as
$$ \pi = \pi_1(B) =\langle \alpha,\beta, \gamma:\, \beta\gamma = \gamma\beta, \,
\alpha = \beta\alpha\beta , \, \alpha=\gamma\alpha\gamma
\rangle. $$
\noindent
The subgroup
generated by $\alpha^2, \beta, \gamma$ is isomorphic to $\zt$ and
corresponds to the fundamental group of the double cover $ \R^3/\Z^3 \to
B$. The linear monodromy of the integral affine manifold $B$ is given
by the map $\mathfrak{l} : \pi_1(B) \to \glthr$ defined on the generators by
\begin{equation}
  \label{eq:2}
  \mathfrak{l}(\alpha) = \text{diag}(-1,1,-1), \quad \mathfrak{l}(\beta) = I, \quad
  \mathfrak{l} (\gamma) = I,
\end{equation}
\noindent
where the coordinate system used is $(x^3,x^1,x^2)$, in line with the
above choice of ordered frame of $\cotan \R^3/\Z^3$. The manifold $B$ admits a CW decomposition with one
$0$-cell $e^0$, three $1$-cells $e^1_1,e^1_2,e^1_3$, three $2$-cells
$e^2_1,e^2_2,e^2_3$ and one $3$-cell $e^3$. This decomposition induces
a $\pi$-equivariant cell decomposition on the universal cover
$\tilde{B} = \rt$
$$ \rt = \bigcup_{g \in \pi_1(B)} (e^0_g \cup e^1_{1,g} \cup e^1_{2,g} \cup
e^1_{3,g} \cup e^2_{1,g} \cup e^2_{2,g} \cup e^2_{3,g} \cup e^3_g)$$
\noindent
Fix the following $\Z[\pi]$-basis for the module
$\mathrm{C}_i(\tilde{B})$
\begin{equation}
  \label{eq:3}
  \begin{cases}
  e^0_{g_0} & \text{if } i=0 \\
  e^1_{1,g_0},e^1_{2,g_0},e^1_{3,g_0} & \text{if } i=1 \\
  e^2_{1,g_0},e^2_{2,g_0},e^2_{3,g_0} & \text{if } i=2 \\
  e^3_{g_0} & \text{if } i=3
  \end{cases}
\end{equation}
\noindent
where $g_0\in \pi$ is some fixed element. For notational ease,
suppress the dependence on 
$g_0$. The boundary maps
$$ \partial_i : \mathrm{C}_i(\tilde{B}) \to \mathrm{C}_{i-1}(\tilde{B})$$
\noindent
of the $\Z[\pi]$-equivariant chain complex of $B$ are given on the
basis of equation \eqref{eq:3} by
\begin{equation}
  \label{eq:4}
  \begin{split}
    \partial_1(e_1^1) &= (\alpha \beta \gamma - 1)e^0 \\
    \partial_1(e_2^1) &= (\beta - 1)e^0 \\
    \partial_1(e_3^1) &= (\gamma - 1)e^0 \\
    \partial_2(e_1^2) &= (1- \beta)e^1_1 - (1+\alpha \gamma)e^1_2 \\
    \partial_2(e_2^2) &= (1- \gamma)e^1_2 - (1- \beta)e^1_3 \\
    \partial_2(e_3^2) &= (1- \gamma)e^1_1 - (1+\alpha\beta)e^1_3 \\
    \partial_3(e^3) &= (1- \gamma)e^2_1 - (1-\alpha)e^2_2 + (1-\beta)e^2_3
  \end{split}
\end{equation}
\noindent
using standard methods in algebraic topology. The corresponding
$\Z[\pi]$-equivariant cochain complex with values in $\zt$ arises when
applying the equivariant $\hom_{\Z[\pi]}(\,.\,;\zt)$ functor to the chain complex
defined above. In particular, for the calculation of
$\mathrm{H}^2(B;\zt_{\mathfrak{l}^{-T}})$, the 
relevant maps are
$$ \xymatrix@1{\hom_{\Z[\pi]}(\mathrm{C}_1(\tilde{B});\zt)
  \ar[r]^-{\delta_2} & \hom_{\Z[\pi]}(\mathrm{C}_2(\tilde{B});\zt)
  \ar[r]^-{\delta_3} & \hom_{\Z[\pi]}(\mathrm{C}_3(\tilde{B});\zt)} $$
\noindent
The twisted cohomology group $\mathrm{H}^2(B;\zt)$ is given by
$$ \mathrm{H}^2(B; \zt_{\mathfrak{l}^{-T}}) = \ker \delta_3/\mathrm{im}\, \delta_2. $$
\noindent
Let $\phi : \mathrm{C}_2(\tilde{B}) \to \zt$ be a twisted cocycle,
then
\begin{equation*}
  \begin{split}
    0 & = (\delta_3 \phi)(e^3) = \phi (\partial_3 e^3) \\ 
    & = (I -
    \mathfrak{l}^{-T}(\gamma))\phi(e^2_1) -(I -
    \mathfrak{l}^{-T}(\alpha))\phi(e^2_2)+(I -
    \mathfrak{l}^{-T}(\beta))\phi(e^2_3) \\
    & = \mathrm{diag}(2,0,2) \phi(e^2_2)
  \end{split}
\end{equation*}
\noindent
where the last equality follows from the definition of $\mathfrak{l}^{-T}$ in
equation \eqref{eq:2}. Thus $\phi$ is a twisted cocycle if and only if
\begin{equation}
  \label{eq:6}
  \phi_1(e^2_2) = 0 = \phi_3(e^2_2)
\end{equation}
\noindent
where $\phi = (\phi_1,\phi_2,\phi_3)$. Suppose now that $\phi$ is a
twisted coboundary, \textit{i.e.} that it lies in the image of
$\delta_2$, so that $\phi = \delta_2 \psi$. Using the boundary maps of
equation \eqref{eq:4}, it is possible to obtain the following
conditions on $\phi$
\begin{equation}
  \label{eq:7}
  \phi_2(e^2_1) = 2k_1, \quad \phi_2(e^2_3) = 2k_2 \\
\end{equation}
\noindent
for $k_1,k_2 \in \Z$. Using equations \eqref{eq:6} and \eqref{eq:7},
it follows that 
$$\mathrm{H}^2(B;\zt_{\mathfrak{l}^{-T}}) \cong (\Z \oplus \Z/2 \oplus \Z) \oplus
(0 \oplus \Z \oplus 0) \oplus (\Z \oplus \Z/2 \oplus \Z). $$

Choose Kronecker duals $\epsilon^2_l \in \hom(\mathrm{C}_2(B);\Z)$ for
each $2$-cells $e^2_l$ in $B$. These induce elements $\tilde{\epsilon}^2_l \in
\hom_{\Z}(\mathrm{C}_2(\R^3/\Z^3);\Z)$. Let $c$ be an equivariant cocycle
representing $[c] \in 
\mathrm{H}^2(B; \zt_{\mathfrak{l}^{-T}})$ on $\R^3/\Z^3$. It takes the form
\begin{equation*}
  c = \sum\limits_{l,r=1}^3 c_{lr} \, \tilde{\epsilon}^2_l \cup \tilde{\theta}^r
\end{equation*}
\noindent
Explicit representatives of $\tilde{\epsilon}^2_1,
\tilde{\epsilon}^2_2, \tilde{\epsilon}^2_3$ on a fundamental domain for $B$ in $\R^3/\Z^3$
in terms of differential forms are given by $\de x^1 \wedge \de x^2,
\de x^2 \wedge \de x^3, \de x^3 \wedge \de x^1$
respectively. Therefore the above cocycle can be represented in terms
of differential forms on $\R^3/\Z^3$ as
\begin{equation*}
    (c_{11}+ c_{22} + c_{33})\, \de x^1 \wedge \de x^2 \wedge \de x^3.
\end{equation*}
Since $ \de x^1 \wedge \de x^2 \wedge \de x^3$ is invariant under the
action of $\pi$ by deck transformations on $\R^3/\Z^3$, it descends to a
closed $3$-form on $B$ whose cohomology class $[ \de x^1 \wedge \de x^2
  \wedge \de x^3]_B$ generates $\mathrm{H}^3(B;\R)$. Thus
$$ \mathcal{D}[c] = (c_{11}+c_{22}+c_{33})\, [\de x^1 \wedge \de x^2 \wedge
  \de x^3]_B. $$


\section{Conclusion}
The above description of the map $\mathcal{D}$ further emphasises the
relation between (almost) Lagrangian fibrations and integral affine
manifolds. While being useful to carry out the classification of
Lagrangian fibrations over higher dimensional manifolds, the map
$\mathcal{D}$ should also reveal information about integral affine
manifolds. For instance, a natural question to ask is

\begin{qn}
  Is it possible to choose the forms $\tilde{\theta}^1, \ldots,
  \tilde{\theta}^n$ so that the image of the map $\mathcal{D}$ lies in
  a cohomology group $\mathrm{H}^3(B;S)$, where $S$ is some finite
  ring extension of the integers?
\end{qn}

The examples of Section \ref{sec:applications} show that this might
indeed be the case. In order to answer this and many other related
questions, it is necessary to investigate what it means for a manifold to be integrally
affine, which is central to the
study of the topology and symplectic geometry of completely integrable
Hamiltonian systems.


\bibliographystyle{abbrv}
\bibliography{mybib}
\end{document}